\renewcommand {\epsilon}{\varepsilon}
\newcommand{\EE}{\mathbb{E}}
\newcommand{\MM}{\mathbb{M}}
\newcommand{\NN}{\mathbb{N}}
\newcommand{\PP}{\mathbb{P}}
\newcommand{\RR}{\mathbb{R}}
\newcommand{\TT}{\mathbb{T}}
\newcommand{\bB}{\mathcal{B}}
\newcommand{\cC}{\mathcal{C}}
\newcommand{\dD}{\mathcal{D}}
\newcommand{\fF}{\mathcal{F}}
\newcommand{\iI}{\mathcal{I}}
\newcommand{\mM}{\mathcal{M}}
\newcommand{\qQ}{\mathcal{Q}}
\newcommand{\tT}{\mathcal{T}}
\newcommand{\fB}{\mathfrak{B}}
\newcommand{\fs}{\mathfrak{s}}
\newcommand{\al}{\alpha}
\newcommand{\e}{\varepsilon}
\newcommand{\la}{\lambda}
\newcommand{\si}{\sigma}
\newcommand{\vp}{\varphi}
\newcommand{\pd}{\partial}
\newcommand{\ra}{\rightarrow}
\newcommand{\lra}{\longrightarrow}
\newcommand{\ti}{\widetilde}
\newcommand{\ha}{\widehat}
\newcommand{\ds}{\displaystyle}
\newcommand{\ind}{\mathbf{1}}
\newcommand{\lqq}{\leqslant}
\newcommand{\gqq}{\geqslant}
\newtheorem{thm}{Theorem}[section]
\newtheorem{prp}[thm]{Proposition}
\newtheorem{lem}[thm]{Lemma}
\theoremstyle{definition}
\newtheorem{exa}[thm]{Example}
\DeclareMathSymbol{\ophi}{\mathalpha}{letters}{"1E}
\renewcommand{\phi}{\varphi}
\newcommand{\be}{\begin{equation}}
\newcommand{\ee}{\end{equation}}
\newcommand{\ben}{\begin{equation*}}
\newcommand{\een}{\end{equation*}}
\newcommand{\ba}{\begin{equation}\begin{aligned}}
\newcommand{\ea}{\end{aligned}\end{equation}}
\DeclareMathOperator{\supp}{supp}
\DeclareMathOperator{\dist}{dist}
\newcommand{\bI}{\mathbf{1}} 
\newcommand{\bR}{\mathbb{R}}
\newfont{\cyrfnt}{wncyr10}
\def\J3{\cyrfnt{\rm \u{\cyrfnt I}}}
\def\j3{\cyrfnt{\rm \u{\cyrfnt i}}}
\begin{document}
\title{Metastability of Morse--Smale dynamical systems\\
perturbed by heavy-tailed L\'evy type noise}

\date{\null}

\author{Michael H\"ogele\footnote{Institut f\"ur Mathematik, Universit\"at Potsdam,
Am Neuen Palais 10, 14465 Potsdam, Germany; hoegele@uni-potsdam.de}  \ 
and 
Ilya Pavlyukevich\footnote{Institut f\"ur Mathematik, Friedrich--Schiller--Universit\"at Jena, Ernst--Abbe--Platz 2, 
07743 Jena, Germany; ilya.pavlyukevich@uni-jena.de}
}

\maketitle

\begin{abstract}
We consider a general class of finite dimensional deterministic dynamical systems 
with finitely many local attractors $K^\iota$ 
each of which supports a unique ergodic probability measure $P^\iota$, which includes 
in particular the class of Morse--Smale systems in any finite dimension. 
The dynamical system is perturbed by a multiplicative non-Gaussian heavy-tailed 
L\'evy type noise of small intensity $\e>0$. Specifically we consider 
perturbations leading to a It\^o, Stratonovich and canonical 
(Marcus) stochastic differential equation. 
The respective asymptotic first exit time and location problem 
from each of the domains of attractions $D^\iota$ in case of inward pointing vector fields 
in the limit of $\e\searrow 0$ was solved by the authors in \cite{HoePav-14}.
We extend these results to domains with characteristic boundaries and 
show that the perturbed system exhibits a metastable behavior 
in the sense that there exits a unique $\e$-dependent 
time scale on which the random system converges to a continuous time Markov chain switching between 
the invariant measures $P^\iota$. As examples we consider $\alpha$-stable perturbations 
of the Duffing equation and a chemical system exhibiting a birhythmic behavior. 
\end{abstract}

\noindent \textbf{Keywords:} 
hyperbolic dynamical system; Morse-Smale property; 
stable limit cycle; small noise asymptotic; $\alpha$-stable L\'evy process; 
multiplicative noise; stochastic It\^o integral; stochastic Stratonovich integral; 
stochastic canonical (Marcus) differential equation; 
multiscale dynamics; metastability; embedded Markov chain; 
randomly forced Duffing equation; birhythmic behavior.

\noindent \textbf{2010 Mathematical Subject Classification: } 60H10; 60G51; 37A20; 60J60; 60J75; 60G52.

\section{Introduction}

Consider a multivariate deterministic dissipative dynamical system 
given as the solution flow of a 
finite-dimensional ordinary differential equation $\dot u=f(u)$.
We assume that it has finitely many local attractors $K^\iota$,
each of which is contained in a 
domain of attraction $D^\iota$. 
By definition, for each initial condition in $D^\iota$ the trajectory never leaves $D^\iota$ and converges to $K^\iota$. 
We shall not impose specific conditions on the geometry of the attractors 
instead we assume that the time averages of the trajectories 
converge weakly to the unique invariant probability measure $P^\iota$ supported on $K^\iota$ as time tends to infinity. 
This convergence should be uniform w.r.t.\ the trajectory's initial condition over compact subsets 
of the domain $D^\iota$. 
Dynamical systems with finitely many stable fixed points $K^\iota= \{\mathfrak{s}^\iota\}$ 
or stable limit cycles belong to the 
evident examples of systems under consideration. 

The behavior of the system changes significantly in the presence of a perturbation 
by noise, however small its intensity $\e>0$ may be. 
In the generic situation, the perturbed solution relaxes from the 
initial position and remains --- usually for a very long time --- 
close to the attractor $K^\iota$ of the initial domain $D^{\iota}$.  
However with probability one, it exits from $D^\iota$ at some random time instant in an abrupt move 
and immediately enters another domain $D^j$, $j \neq \iota$, 
where the same performance starts anew.  
In this way, step by step and after possibly many repetitions 
the process visits all domains, not all of them of course 
with the same frequency and for an equally long period. 
In the literature, such a behavior of the trajectory is referred to as \textit{metastability}. 

In Galves et al.\ \cite[p.\ 1288]{GalvesOV-87}, the authors describe 
the metastable behavior of a deterministic dissipative dynamical system subject to 
small \textit{Gaussian} perturbations as follows: 
``A stochastic process with a unique stationary 
invariant measure, which [...] behaves for a very long time as if it were described 
by another ``stationary'' measure (metastable state), performing [...] 
an abrupt transition to the correct equilibrium. 
In order to detect this behavior, it is suggested [...] to look at the time averages along typical trajectories; 
we should see: apparent stability --- sharp transition --- stability.''

In any case, the transition times between different domains of attraction 
tends to infinity as the noise amplitude $\e$ goes to zero, however, 
the growth rate of the expected transition time 
as well as the probability to pass from $D^\iota$ to $D^j$ 
strongly depend on the nature of the noise and the properties of the underlying deterministic system.

In this article, we study the behavior of a dynamical system 
given as the solution flow of a rather generic finite-dimensional ordinary differential equation $\dot u=f(u)$
subject to a small noise perturbation by a multiplicative L\'evy type 
noise with a discontinuous, non-Gaussian heavy-tailed component. 
Since its dynamics will differ strongly from the case of Gaussian perturbations,
let us briefly discuss the underlying deterministic dynamical system and 
summarize the 
metastability results in the Gaussian case.

\subsection{Generic dynamical systems under consideration}

There is a large body of literature on the classification of deterministic dynamical systems 
and their stability properties, which we obviously cannot review here. 
Instead, we will restrict ourselves 
to the minimal necessary orientation of the reader about the systems we consider in this article.  
In the sequel we will mainly refer to the overview articles \cite{AV09,PP08}, 
introductory books \cite{HalKoc91,Te12}, and the extensive list of references therein. 

The class of dynamical systems we consider has finitely many well separated local attractors, 
with respective domains of attractions. We suppose that all trajectories starting in a compact set 
inside the a domain of attraction converge weakly and uniformly to a unique invariant probability measure concentrated 
on the local attractor.  
This invariant measure is assumed to be parametrized by the sojourn times of the dynamical system on the attractor. 
Since this class is not classical we briefly give a subsumption of its relation into well-known classes. 

The simplest class of examples are gradient systems, where $f$ is given as the gradient $-\nabla U$ 
of a smooth non-degenerate multi-well potential function $U\colon \RR^d \ra \RR$ with finitely many minima 
$\mathfrak{s^\iota}$, $\iota =1, \dots, \kappa$.
In this case, the local invariant measure is given as a unit point mass $P^\iota = \delta_{\fs^\iota}$. 

A finite-dimensional dynamical system is said to have the Morse--Smale property 
if the set of its non-wandering points consists of a union of finitely many periodic orbits (limit cycles), 
whose points are all hyperbolic and whose invariant manifolds meet transversally. 
For each of the non-trivial periodic stable orbits of the non-wandering sets of the 
Morse--Smale system, which parametrizes the corresponding limit cycle, say, $K^\iota$, 
we can define the invariant measure $P^\iota$ by 
\[
P^\iota(A) := \frac{1}{\tT_\iota} \int_{0}^{\tT_\iota} \ind_{A}(u(s;x))\, ds, 
\quad A\mbox{ Borel }, \quad u(0;x)=x \in K^\iota,\  u(t + \tT_\iota;x) = u(t;x). 
\]
In the Appendix it is shown that a Morse--Smale dynamical system in any dimension 
over a compact domain satisfies the required property 
that for all initial conditions uniformly bounded from the separating manifold, 
the time average of the trajectory converges weakly to $P^\iota$. 
In dimensions $1$ and $2$ Morse--Smale systems coincide with the class of structurally stable systems which are 
generic in the sense of being an open dense subset of all dynamical systems generated by $\cC^2$ vector fields, see \cite{Pe62,Pu67}.
It is known for a long time that in higher dimensions $d\gqq 3$, the Morse--Smale systems are a subclass of 
structurally stable systems but that the latter fail to be generic. 

We emphasize, however, that our assumptions are not restricted to the Morse--Smale systems, 
since we require only the existence of finitely many local attractors
satisfying the above mentioned statistical property on the convergence of the time averages. 

Finally we remark, that from a slightly different perspective
we can interpret 
the finitely many invariant measures $P^\iota$
as the ergodic components 
of the so-called \textit{Sinai--Bowen--Ruelle measure} (SRB-measure, for short), 
sometimes referred to as the \textit{physical measure}. For details we refer to the classical text~\cite{Bo75}
and for a more recent overview to \cite{Yo02}.

\subsection{The hierarchy of cycles and time scales in the generic Gaussian case} 

The small noise analysis and metastability results for randomly perturbed dynamical systems of the form $dX_t=f(X_t)dt+\e \, dW$, 
$W$ being a Brownian motion
(the noise term may be multiplicative as well) may be performed with the help of the large 
deviations theory by Freidlin and Wentzell
\cite{FreidlinW-98}. 
It is well known that with any $D^\iota$ that contains a unique point attractor 
$K^\iota=\{\fs^\iota\}$ we can associate a positive number $V_\iota$ such that
the expected exit time from $D^{\iota}$ is asymptotically proportional 
to $\exp(V_{\iota}/\e^{2})$ in the limit of $\e \searrow 0$.
This result is a version of what is known as Kramers' law \cite{Kramers-40} 
in the physics and chemistry literature. The constant $V_{\iota}$ can be interpreted 
as the height of the lowest ``mountain pass'' on the way from the attractor $\fs^\iota$ to the boundary $\partial D^\iota$
in the 
energy landscape given by the so-called \textit{quasi-potential} determined by the vector field $f$.
The same result would hold for an arbitrary attractors $K^\iota$ whose points are equivalent w.r.t.\ the quasi-potential,
that is do not require any additional work for transitions between them (for example like in the case of a limit cycle).

Further, for any two domains $D^i$ and $D^j$, $i\neq j$, 
there is a number $V_{ij}\gqq 0$ such that the 
expected transition time from $D^{i}$ 
to $D^{j}$ is asymptotically proportional 
to $\exp(V_{ij}/\e^{2})$.  
Note that in the generic case the constants $V_{ij}$ are different and the time scales 
$\exp(V_{ij}/\e^2)$ are thus exponentially separated. 
This naturally leads to the hierarchy of consecutive transitions of the random trajectory staring in
$D^\iota$, the so-called the hierarchy of cycles.

Indeed, starting in $D^\iota$, we determine the unique sequence of indices $j(0)=\iota$, $j(1),j(2),\dots$, defined such that 
$V_{j(k-1),j(k)}=\min_{j\neq j(k-1)} V_{j(k-1),j}$, $k\gqq 0$.
The sequence $\{j(k)\}$ is periodic with some period $p_1$ and the states $C(1)=\{\iota,j(1),\dots, j(p_1-1)\}$ 
constitute the cycle of the first rank. For $C(1)$ we can analogously define cycles of the higher orders, the last cycle
containing all the states $\{1,\dots,\kappa\}$. Each cycle $C$ contains the main state $K(C)$, that is the index of the attractor,
in the basin of which the random trajectory spends most of its time before leaving the set $\cup_{j\in C} D^j$.
For a detailed exposition we refer to Freidlin and Wentzell \cite{FreidlinW-98} or to a recent work by Cameron \cite{Ca13}. 

It is a distinguishing property of a system perturbed by a small Gaussian noise that the hierarchy of cycles, their main states and
the logarithmic rates of the associated exponentially large transition times are
not random and are determined by the vector field $f$ with the help of the quasipotential.

Various refinements and generalizations of these results include the proof of the convergence of a small noise 
diffusion $X$ in a double-well 
potential to a two-state Markov chain \cite{GalvesOV-87, KipnisN-85}, a connection between the metastability and the spectrum 
of the diffusion's generator \cite{BerGen-10,BovierEGK-04,BovierGK-05, Kolokoltsov-00, KolokoltsovM-96},
or the study of the infinite dimensional systems \cite{BerGen-12, Br91,Br96, FL82, Fr88}.

\subsection{The unique time scale and total communication of states in the generic regularly varying L\'evy case}

In this paper we treat a $d$-dimensional dynamical system $\dot u=f(u)$ perturbed by a (multiplicative) L\'evy
noise with heavy-tailed jumps, that is a process whose L\'evy measure possesses regularly varying tails
with the index $-\alpha<0$. As an example of such a perturbation one can have in mind 
$\alpha$-stable L\'evy noise, $\alpha\in(0,2)$.

To our best knowledge, the Markovian systems with heavy-tailed jumps were firstly studied by Go\-do\-van\-chuk \cite{Godovanchuk-82}. 
The asymptotics of the first exit times an metastability results in the one-dimensional 
setting of systems represented by SDEs driven by additive 
heavy-tailed L\'evy processes were obtained in \cite{ImkellerP-06,ImkellerP-06}. 
Further the theory was developed for multivariate systems with heavy-tail 
multiplicative noise in \cite{ImkPavSta-10, Pavlyukevich11}
and for a class of stochastic reaction--diffusion equations in \cite{DHI13}. 

The behavior of a dynamical system perturbed by heavy jumps differs qualitatively from the 
Gaussian case. First, the behavior becomes non-local, that is 
by a single jumps of an arbitrary big magnitude the system may change its state instantly. 
Second, the power law jumps determining the heavieness of the jumps also determines the unique time scale on which the
exits from domains $D^i$ and 
transitions between the domains $D^i$ and $D^j$ occur.

For simplicitiy let us sketch the case of 
a small \textit{additive} perturbation by a stable L\'evy process $\e Z$ with the jump measure $\nu(A)=\int_A \|z\|^{-d-\alpha}dz$ . 
Let $K^i = \{\fs^i\}$ be a stable point. 
In this situation, the first exit time from the domain $D^i$ has the mean value $Q_i/\e^\alpha$ with the prefactor
$Q_i=\int_{\mathbb R^d\backslash D^i} \|z-\fs^i\|^{-d-\alpha}dz$. In other words, 
the prefactor $Q_{i}$ measures the set of all jump increments of the noise, 
whose result is the exit 
from the domain $D^i$ at a single jump.
We refer to \cite{DHI13, HoePav-14, ImkellerP-06,ImkellerP-08} for detailed explanations. 

To describe transitions between the different domains of attraction we will see that in contrast to the Gaussian 
hierarchy of cycles, all mean transition times from the domain $D^{i}$ to $D^{j}$
are asymptotically equivalent to $Q_{ij}/ \e^\alpha$ in the limit of small $\e$ for 
$Q_{ij}=Q_i^{-1}\int_{D^j}\|z-\fs^i\|^{-d-\alpha}dz $. 
This means that the transition rates are not well separated for small $\e$. 
This \textit{generic} picture in the heavy-tailed framework may be associated
with the very \textit{degenerate} Gaussian case when all logarithmic rates $V_{ij}$ are identical
and the transition behavior is determined by the sub-exponential prefactors.
For a very precise asymptotics of these prefactors 
in the Gaussian setting we refer to Kolokoltsov \cite{KolokoltsovM-96,Kolokoltsov-00} and Bovier et al.\ \cite{BovierEGK-04}.

In \cite{HoePav-14}, we generalize the exit time results 
to underlying deterministic generic dynamical systems with non-point attractors. 
The stable state $\fs^i$ as a \textit{geometric} object appearing in the formulae for the mean transition times
has to be replaced by a \textit{statistical} 
quantity given as the ergodic invariant probability measure $P^i$ concentrated on the local 
attractor $K^i$ of the respective domain $D^i$. More precisely
we prove that a transition time between domains $D^i$ and $D^j$ 
asymptotically grows as $\ti Q_{ij}/ \e^\al$
with 
\[
\ti Q_{ij} = \frac{\int_{K^i} \int_{D^j}\|z-v\|^{-d-\alpha} dz P^i(dv)} {\int_{K^i} \int_{\mathbb{R}^d\backslash D^i}\|z-v\|^{-d-\alpha} dz P^i(dv)}. 
\]
The coefficient $\ti Q_{ij}$ weights the points on the attractor $K^i$ with respect to 
the corresponding ergodic invariant measure $P^i$.
For details we refer to the introduction of \cite {HoePav-14}. 

We see that generically the expected transition time 
between any two domains of attraction is proportional to $1/\e^\al$. 
Moreover it is shown that the respectively renormalized 
transition times are asymptotically exponentially distributed. 
Let us consider the perturbed path $X^\e$ on the time scale $t/\e^\al$. On this time scale we would 
expect that the process $X^\e(\frac{t}{\e^\alpha})$ spends most of the time in the domains of attraction $D^i$
exhibiting instantaneous single jump transitions from the vicinity of the attractor $K^i$ to the domain $D^j$.
Thus the first result of this paper will describe a Markov chain $m=(m_t)_{t\gqq 0}$ on the index set
 $\{1, \dots, \kappa\}$, 
which will specify the domain of attraction $D^i$ the process $X^\e_{\cdot /\e^\alpha}$ currently sojourns.
Roughly speaking, this allows us to determine the probability for the process $X^\e_{\cdot /\e^\alpha}$
to visit domains $D^{i_1},\dots, D^{i_n}$ at prescribed deterministic times $0<t_1<\cdots< t_n$, $n\gqq 1$.

In the second part, we prove a stronger result. Under the condition that $X^\e_{t /\e^\alpha}\in D^i$ for some $i\in\{1,\dots,\kappa\}$,
the process $X^\e$ is naturally located in the vicinity of the attractor $K^i$. We will determine the location of $X^\e$
at a slightly randomized observation time $(t+\si r_\e)\e^{-\al}$, $\sigma$ being an independent random variable uniformly distributed on 
$[-1,1]$ and $r_\e$ being an arbitrary rate characterizing 
the time measurement error such that $r_\e\to 0$ and $r_\e/\e^\alpha\to \infty$. We show
that in the limit $\e\to 0$, 
the location $X^\e_{(t+\si r_\e)\e^{-\al}}$ is distributed on the attractor $K^i$ according to the ergodic measure $P^i$,
whereas the attractor index $i=m_t$ is itself distributed with the law of the Markov chain $m$.
Essentially this means that within a given vanishing error bound 
on the time scale $t/\e^\al$ only the statistical aggregate of the behavior 
$X^\e$ can be perceived. 

We can make the intuition presented above rigorous for a general class of additive and multiplicative 
L\'evy noises with a regularly varying L\'evy measure. 
In particular, our main result covers perturbations in the sense of It\^o and Stratonovich, as well as in the 
sense of canonical (Marcus) equation, where jumps in general do not occur along straight lines, 
but follow the flow of the vector field which determines the multiplicative noise. 

In the physics and other natural sciences, 
Gaussian perturbations of dynamical systems with limit cycle attractors 
have been considered since quite some time, see e.g.\ Epele et al.\ \cite{EFSV85}, 
Moran and Goldbeter \cite{MorGol84}, Hill \textit{et al.} \cite{HLP09}, 
Kurrer and Schulten \cite{KurSch91}, Liu and Crawford \cite{LC98}, and Saet and Viviani \cite{SV87}. 
As an application of our main result we present two examples in detail: the Duffing equation with two point attractors
and a planar system from \cite{MorGol84} with two stable limit cycles which lie in one another.

\section{Object of study and main result}\label{sec: object of study}
\subsection{Deterministic dynamics}

We consider a globally Lipschitz continuous 
vector field $f\in \cC^2(\RR^d, \RR^d)$. 
It is well-known that this assumption is sufficient to establish the existence and 
uniqueness of the dynamical system, 
given as the solution flow $\vp$ of the autonomous ordinary differential equation
\begin{equation}\label{eq: ode}
\dot u = f(u), \qquad u(0;x) = x \in \RR^d, 
\end{equation}
where we denote by $\vp_t(x) := u(t;x)$. 
Note that the dynamical system can be prolonged to arbitrary negative times. 

We assume the following properties of $\phi$. 
\begin{enumerate}
 \item The set of non-wandering points of $\phi$ contains 
finitely many local attractors $K^\iota$, $\iota =1, \dots, \kappa$, $\kappa\gqq 1$, 
with corresponding open domains of attractions $D^\iota$. 
For definitions we refer to \cite{AV09} and \cite{HalKoc91}. 

\item All non-wandering points of $\phi$ are hyperbolic and the corresponding invariant manifolds meet transversally. 

 \item For any $R>0$ such that $\bigcup_{\iota} K^\iota \subset B_R(0)$, there 
exits a bounded, measurable, connected set $\iI_R\subset B_R(0)$ with smooth boundary, 
such that $f\big|_{\pd \iI_R}$ is uniformly inward pointing. 

\item For each local attractor $K^\iota$ there exists a unique probability measure $P^\iota$ 
supported on $K^\iota$, $\supp(P^\iota) = K^\iota$, such that 
such that for all non-negative, measurable and bounded functions 
$\psi\colon \RR^d \ra \RR$, any $R>0$ defined in 3,  
and all closed subsets $A$ contained in the interior of $D^\iota \cap \iI_R$ the limit
\begin{equation}\label{eq: ergodicity}
\lim_{t\ra \infty} \sup_{x\in A} \frac{1}{t} \int_0^t  \psi(\vp_s(x)) ds = \int_{K^\iota} \psi(v) P^\iota(dv) 
\end{equation}
holds true. 
\end{enumerate}

\subsection{The random perturbation}

On a filtered probability space $(\Omega, \fF , (\fF_t)_{t\gqq 0},\mathbb{P})$, satisfying 
the usual hypotheses in the sense of Protter \cite{Protter-04}, 
we consider a L\'evy process $Z = (Z_t)_{t\gqq 0}$ with values in $\RR^m$, $m\gqq 1$, 
and the characteristic function
\begin{equation*}
 \EE e^{i\langle u, Z_1 \rangle} =\exp\Big( -\frac{\langle Au,u\rangle}{2}  
+i\langle b,u\rangle +\int\Big( e^{i\langle u, z \rangle}-1-i\langle u,z\rangle\bI_{B_1(0)}(z)\Big) \nu(dz)\Big),\ u\in\bR^m,
\end{equation*}
where $A$ is a symmetric nonnegative definite $m\times m$ (covariance) matrix, $b\in \RR^m$, and $\nu$ 
a $\si$-finite measure on $\RR^m$ satisfying 
\(\nu(\{0\}) = 0 \) and $\int_{\RR^m} (1 \wedge \|y\|^2) \nu(dy) < \infty$. 
The measure $\nu$ is referred to as the L\'evy measure of $Z$, 
and $(A, \nu, b)$ is called the generating triplet of $Z$. 

 Let us denote by $N(dt, dz)$ the associated Poisson random measure with the intensity measure $dt \otimes \nu(dz)$ 
and the compensated Poisson random measure 
$\ti N(dt, dz) = N(dt, dz) - dt \nu(dz)$. Consequently, by the L\'evy--It\^o theorem (see e.g.\ Applebaum \cite[Chapter 2]{Applebaum-09}) 
the L\'evy process~$Z$ given above has the following a.s.\ path-wise additive decomposition  
\begin{equation}
\label{eq: Levy-Ito}
Z_t=A^{\frac{1}{2}} B_t+b t + \int_{(0, t]}\int_{0<\|z\|\lqq 1} z \ti N(ds, dz) + \int_{(0,t]}\int_{\|z\|> 1} z N(ds, dz),\qquad t\gqq 0,
\end{equation}
with $B = (B_t)_{t\gqq 0}$ being a standard Brownian motion in $\RR^m$. 
Furthermore, the random summands in (\ref{eq: Levy-Ito}) are independent. 
For further details on L\'evy processes we refer to Applebaum  \cite{Applebaum-09} and Sato \cite{Sato-99}.

The following assumption about the big jumps of $Z$ is crucial for our theory. 

\noindent \textbf{(S.1)} The L\'evy measure $\nu$ of the process $Z$ is 
\textit{regularly varying at $\infty$} with index $-\alpha$, $\alpha>0$. 
Let $h\colon (0,\infty)\to (0,\infty)$ denote the tail of $\nu$  
\begin{equation}
\label{def: h}
h(r):=\int_{\|y\|\gqq r }\nu(dy).
\end{equation}
We assume that there exist $\alpha>0$ and a non-trivial self-similar Radon measure $\mu$ on $\bar \bR^m\backslash\{0\}$
such that $\mu (\bar\bR^m\backslash \bR^m)=0$ and
  for any $a>0$ and any Borel set $A$ bounded away from the origin, $0\notin \overline{A}$, with $\mu(\partial A)=0$, 
the following limit holds true:
\begin{equation}
\label{eq: regular variation}
\mu(aA)=\lim_{r\ra \infty} \frac{\nu(raA)}{h(r)} = 
\frac{1}{a^\alpha} \lim_{r\ra \infty} \frac{\nu(rA)}{h(r)}= \frac{1}{a^\alpha} \mu(A).
\end{equation}
In particular, following \cite{BinghamGT-87} there exists a positive function $\ell$ slowly varying at infinity 
such that 
\begin{equation*}
h(r) = \frac{1}{r^{\alpha} \ell(r)}\qquad\mbox{ for all} \quad r>0.
\end{equation*}
The self-similarity property of the limit measure $\mu$ implies that $\mu$ assigns no mass to
spheres centered at the origin of $\bR^m$ and has no atoms.
For more information on multivariate heavy tails and regular variation we refer
the reader to Hult and Lindskog \cite{HultL-06-1} and Resnick \cite{Resnick-04}.
The following set of assumptions deals with the multiplicative perturbation of the dynamical system $u$ 
by the L\'evy process $Z$.

\noindent \textbf{(S.2)}  
\noindent Consider continuous maps $G\in \cC(\mathbb{R}^d\times \mathbb{R}^m, \mathbb{R}^d)$ 
and $F, H\colon  \RR^d \ra \RR^d$ and fix the notation
\[
a(x, y) := F(x) F(y)^* \qquad \mbox{ for }x, y \in \RR^d,
\]
where $F(y)^*$ is the transposed (row) vector of $F(y)$. 
We assume that for any $R>0$ 
there exists $L = L_R >0$ such that
$f$, $G$, $H$ and $F$ satisfy the following properties. 

\begin{enumerate}
\item \textbf{Local Lipschitz conditions:} For all $x, y \in \iI_R$ 
\begin{multline*}
\|f(x) -f(y)\|^2  + \|a(x,x) - 2 a(x, y) + a(y,y)\| + \|H(x) - H(y)\|^2\\
+ \|F(x) - F(y)\|^2 +  \int_{B_1(0)} \|G(x,z)-G(y,z)\|^2 \nu(dz) \lqq L^2 \|x-y\|^2.
\end{multline*}

\item \textbf{Local boundedness: } For all $x\in \iI_R$ 
\begin{align*}
\|f(x)\|^2  + \|a(x,x)\| + \|H(x)\|^2 + \|F(x)\|^2 +  \int_{B_1(0)} \|G(x,z)\|^2 \nu(dz) \lqq L^2 (1+ \|x\|^2).
\end{align*}

\item \textbf{Large jump coefficient:} 
For all $x, y \in \iI_R$ and $z\in \RR^m$
\begin{align*}
\|G(x,z) - G(y, z)\| \lqq L e^{L (\|z\| \wedge L)} \|x-y\|. 
\end{align*}

\item \textbf{Local bound for $G$ in small balls:} 
There exists $\delta'>0$ such that for $z\in B_{\delta'}(0)$ 
\begin{align*}
\sup_{x\in B_{\delta'}(K^\iota)} \|G(x, z)\| \lqq L. 
\end{align*}
\end{enumerate}

\label{S.4 G to 0 for small balls}

\begin{prp}
\noindent Let the assumptions \emph{\textbf{(S.2.1--3)}} be fulfilled.  
Then for any $\e, \delta\in (0,1)$, $R> 0$ and $x\in \iI_R$ the stochastic differential equation 
\begin{equation}
\label{eq: sde}
\begin{aligned}
X_{t,x}^\e &= x + \int_0^t f(X^\e_{s,x})\, ds 
+ \e \int_0^t H(X^\e_{s,x}) b \, ds + \e \int_0^t F(X^\e_{s, x}) \, d (A^{\frac{1}{2}}B_s)\\
&\qquad  + \int_0^t \int_{\|z\| \lqq 1} G(X^\e_{s-,x}, \e z) \ti N(ds, dz) +  \int_0^t \int_{\|z\| > 1} G(X^\e_{s-,x}, \e z) N(ds, dz)  
\end{aligned}
\end{equation}
has a unique strong solution $(X^\e_{t \wedge \TT,x})_{t\gqq 0}$ with c\`adl\`ag paths in $\RR^d$ 
which is a strong Markov process with respect to $(\fF_t)_{t\gqq 0}$, 
where 
\begin{align*}
\TT =\TT_x^R(\e)&:=\inf\{t\gqq 0\colon  X^\e_{t,x}\notin \iI_R\}. 
\end{align*}
is the first exit time from $\iI_R$.
\end{prp}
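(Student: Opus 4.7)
The plan is to reduce the problem to the standard existence and uniqueness theory for SDEs driven by L\'evy processes via a localization plus interlacing argument, which is natural since only the solution stopped at $\TT$ is sought and $\iI_R$ is bounded.

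First, I would fix $R>0$ and extend the coefficients outside a slightly enlarged neighborhood of $\iI_R$ to globally bounded and globally Lipschitz functions $\ti f, \ti H, \ti F$ on $\RR^d$ and $\ti G$ on $\RR^d\times B_1(0)$ via a smooth cutoff; by \textbf{(S.2.1)}--\textbf{(S.2.2)} these modified coefficients satisfy global Lipschitz and linear growth conditions. Next, I would split the jump integral into the small-jump part over $\{\|z\|\lqq 1\}$, whose integrand is globally square--integrable against $\nu$ with the required Lipschitz constant, and the large-jump part over $\{\|z\|>1\}$, which corresponds to a compound Poisson process with finite rate $\nu(\{\|z\|>1\})<\infty$. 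For the SDE containing only the drift, the Brownian term and the small-jump part with the extended coefficients, classical Picard iteration (see, e.g., Applebaum \cite{Applebaum-09}, Thm.~6.2.9, or Protter \cite{Protter-04}, Chap.~V) yields a unique global strong solution that is strong Markov w.r.t.\ $(\fF_t)_{t\gqq 0}$.

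The full solution is then built by the usual interlacing procedure along the random grid $0=\tau_0<\tau_1<\tau_2<\cdots$ of the large jump times of $Z$: on each interval $[\tau_{k-1},\tau_k)$ the process evolves according to the continuous-plus-small-jump SDE, and at $\tau_k$ it is updated by $X^\e_{\tau_k}=X^\e_{\tau_k-}+G(X^\e_{\tau_k-},\e\,\Delta Z_{\tau_k})$. Uniqueness propagates inductively from one interval to the next, and the strong Markov property survives because each jump update is a measurable function of $X^\e_{\tau_k-}$ and of the independent increment $\Delta Z_{\tau_k}$. Stopping the resulting process at $\TT$ removes the effect of the extension, since up to $\TT$ the trajectory stays in $\iI_R$, where the modified and the original coefficients coincide, so the stopped process solves~\eqref{eq: sde}.

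The main obstacle is assumption \textbf{(S.2.3)}: the Lipschitz constant $L e^{L(\|z\|\wedge L)}$ of $G(\cdot,z)$ is finite for each $z$ but unbounded in $\|z\|$, which precludes a direct global Picard argument over the entire large-jump integral. The interlacing construction circumvents this by treating one jump realization at a time, for which \textbf{(S.2.3)} supplies a finite deterministic Lipschitz constant; combined with the a.s.\ finite number of large jumps on any bounded time interval, this suffices to patch the pieces into a well-posed strong Markov solution of~\eqref{eq: sde} up to $\TT$.
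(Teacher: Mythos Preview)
Your proposal is correct and is essentially the standard argument. The paper itself does not give a proof at all: immediately after the Proposition it simply writes ``A proof can be found for instance in Ikeda and Watanabe \cite{IW89}, Theorem 9.1, or Chapter 6 in Applebaum \cite{Applebaum-09}.'' Your localization-plus-interlacing sketch is precisely the construction underlying these references (Applebaum's Chapter~6 in particular builds the solution by interlacing the large compound-Poisson jumps into the continuous-plus-small-jump solution obtained by Picard iteration), so there is nothing to compare beyond the fact that you have spelled out what the paper leaves to a citation.
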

A proof can be found for instance in Ikeda and Watanabe \cite{IW89}, Theorem 9.1, 
or Chapter 6 in Applebaum \cite{Applebaum-09}. 
The multiplicative perturbations in the sense of It\^o, Fisk--Stratonovich or (canonical) Marcus equations
could be of special interest for applications.   
We refer the reader to Applebaum \cite{Applebaum-09}, Ikeda Watanabe \cite{IW89} and Protter \cite{Protter-04} 
for a general theory of stochastic integration in the It\^o and Fisk--Stratonovich sense 
and to Applebaum~\cite{Applebaum-09}, Kurtz et al.\ \cite{KurtzPP-95}
and Kunita \cite{Kunita-04} for a construction of the canonical Marcus equations. 
A brief comparison of these equations can be also found in Pavlyukevich \cite{Pavlyukevich11}. 

For example, assume that $Z$ is a pure jump L\'evy process with $A=0$, $b=0$, and
let $\Phi\colon \bR^d\to\bR^{d\times m}$ be a globally Lipschitz continuous function. 
Taking
\[
G(x,z):=x-\Phi(x)z
\]
we yields the It\^o SDE with the multiplicative noise
\ba
\label{eq:ito}
X_t&=x+\int_0^t f(X_s)dt+\e\int_0^t \Phi(X_{s-})dZ_s,\\
\ea
To obtain a canonical (Marcus) equation with the multiplicative noise
\ba
\label{eq:marcus}
X_t^\diamond &=x+\int_0^t f(X_s^\diamond)dt+\e\int_0^t \Phi(X_{s-}^\diamond)\diamond dZ_s.
\ea
we denote by $\psi^z(x) = y(1;x, z)$ the solution of the nonlinear ordinary differential equation
\begin{equation}
\label{eq:suppl}
\begin{cases}
 &\dot y(s) = \Phi(y(s)) z,\\
  & y(0) =x , \quad s\in [0,1].
\end{cases}
\end{equation}
and set
\[
G(x,z):=\psi^z(x).
\]
If $L$ is the Lipschitz constant of the matrix function $\Phi$ then the 
Gronwall lemma implies that
\[
\|G(x,z)-G(y,z)\| \lqq L e^{L\|z\|} \|x-y\|\qquad \forall x, y \in D, z\in \RR^m, 
\] 
what justifies the assumption (S.2.3). 

\subsection{The main result and examples   \label{subsec: the main results}}

\noindent For $x\in \bR^d$, $U\in \fB(\RR^d)$ with $x\notin U$ 
we denote the set of jump increments $z\in \RR^m$ 
which send $x$ into $U$ by
\begin{align}\label{def: event E}
E^{U}(x)& :=\{z\in \RR^m\colon x + G(x,z)\in U\}. 
\end{align}

\noindent We define the measure $Q^\iota$ on $\fB(\RR^d)$ assigning 
\begin{align}\label{def: measure Q}
Q^\iota(U) &:=\int_{K^\iota} \mu(E^{U}(y))~dP^\iota(y),
\end{align}
where $P^\iota$ is a measure on $K^\iota$ defined in (D.1) and $\mu$ is a regularly varying limiting jump measure 
appearing in
\eqref{eq: regular variation}. 
For $\e>0$ denote
\begin{align*}
\la_\e^\iota &:= \int_{K^\iota} \nu\Big(\frac{E^{(D^\iota)^c}(y)}{\e}\Big) dP^\iota(y) 
\quad \mbox{ and }\quad h_\e := h\Big(\frac{1}{\e}\Big).
\end{align*}
Then the equation \eqref{eq: regular variation} implies  
\begin{align*}
\lim_{\e\ra 0+} \frac{\la_\e^\iota}{h_\e} = Q^\iota((D^\iota)^c).  
\end{align*}
The main result of this article is the following metastability result. 

\begin{thm}\label{thm: metastability}
Let assumptions \emph{\textbf{(D.1)}} and \emph{\textbf{(S.1-2)}} be fulfilled and suppose that for all
$\iota=1,\dots,\kappa$,
\begin{equation} \label{eq: non-degenerate separatrix}
Q^\iota\Big(\RR^d \setminus \bigcup_{\ell=1}^\kappa D^\ell\Big) = 0.
\end{equation}
Then there exists a continuous-time Markov chain $m=(m_t)_{t\gqq 0}$ with values in the set 
$\{1, \dots, \kappa\} $
and a generator matrix 
\begin{align}
\label{eq:Q}
\qQ 
= \begin{pmatrix} 
 -Q^1\left(\left(D^1\right)^c\right)   & Q^1\left(D^2\right) & \dots & Q^1(D^{\kappa}) 
\\ 
\vdots&&&\vdots\\
Q^{\kappa} \left(D^1\right)  & \dots  & Q^{\kappa}(D^{\kappa-1}) & -Q^{\kappa}\left(\left(D^{\kappa}\right)^c\right)   
  \end{pmatrix}.
\end{align}
such that the following statements hold.
\begin{enumerate}
 \item Let $N\gqq 1$, $\iota_0,\dots, \iota_N \in \{1, \dots, \kappa\}$, $x\in D^{\iota_0}$, and
$0< s_1 < \dots < s_N$. Then 
\begin{align*}
\lim_{\e \ra 0+} \PP_x\Big(X^\e_{\frac{s_1}{h_\e}}\in D^{\iota_1}, \dots, X^\e_{\frac{s_N}{h_\e}}\in D^{\iota_N} \Big) = 
\PP_{\iota_0}(m_{s_1} = \iota_1, \dots, m_{s_N} = \iota_N).
\end{align*}
 \item Let $\si$ be a random variable which is uniformly distributed on $[-1, 1]$ and independent of $Z$. 
Let $r_\e\colon \RR_+\to\RR_+ $ be such that $r_\e \searrow 0$ and  $r_\e h_\e^{-1} \nearrow\infty$ as $\e \searrow 0$. 
Let $\psi \in \cC_b(\RR^{d}, \RR)$, $\iota\in \{1,\dots,\kappa\}$, and $0<s<t$. Then 
\begin{align*}
&\lim_{\e\ra 0+} 
\EE\Big[\psi\Big(X^\e_{\frac{t+\si r_\e}{h_\e} ,x}\Big)\Big| X^\e_{\frac{s}{h_\e},x} \in D^{\iota}\Big]
= \EE \Big[\int_{\RR^d} \psi(v) \, dP^{m_{t}}(v)\Big| m_{s} = \iota\Big]. \\
\end{align*}
\end{enumerate}
\end{thm}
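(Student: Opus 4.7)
The plan is to bootstrap the one-transition results from \cite{HoePav-14} into a full metastable description by iterating the strong Markov property at the successive exit times, using that the ergodic property (D.1.4) kills the relaxation phase on the macroscopic time scale $1/h_\e$. Let $\tau_1^\e = \inf\{t\geq 0 : X_{t,x}^\e \notin D^{\iota_0}\}$. The results of \cite{HoePav-14} together with assumption (S.1) and the non-degenerate separatrix condition \eqref{eq: non-degenerate separatrix} imply two facts that are the core input: first, $h_\e \tau_1^\e$ converges in law to an exponential random variable with parameter $Q^{\iota_0}((D^{\iota_0})^c)$, uniformly for $x$ in compacts of $D^{\iota_0}$; second, for any Borel set $U$ with $\overline{U} \subset D^j$, the hitting location satisfies $\PP_x(X_{\tau_1^\e}^\e \in U) \to Q^{\iota_0}(U)/Q^{\iota_0}((D^{\iota_0})^c)$, and in particular almost all mass of $X_{\tau_1^\e}^\e$ is placed inside some basin $D^j$ at a point away from $\partial D^j$.

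\textbf{Finite-dimensional convergence (part 1).} I would proceed by induction on $N$. For $N=1$, write $\PP_x(X_{s_1/h_\e}^\e \in D^{\iota_1})$ by splitting according to whether $\tau_1^\e \leq s_1/h_\e$ and to the landing basin; the exit-time limit gives the probability of at least one transition and \eqref{eq: non-degenerate separatrix} ensures the landing point lies in some $D^j$ at a positive distance from its separatrix with probability tending to one. In that case, after a deterministic relaxation time $T$ the flow drives $X^\e$ into a fixed compact $A\subset D^j$ (this is where (D.1) is used to control the post-jump trajectory, and $T/(s_1 h_\e^{-1}) \to 0$, so $T$ is negligible on the macroscopic scale). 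Applying the strong Markov property at time $\tau_1^\e + T$ and the uniformity of the exit asymptotics on $A$, one proceeds exactly as in the classical two-state argument: the number of transitions up to macroscopic time $s_1$ approaches a Poisson process with the correct rates, and the successive landing basins form a discrete-time Markov chain with the holding-time-normalized transition probabilities encoded in \eqref{eq:Q}. Iterating this construction across the time points $s_1 < s_2 < \cdots < s_N$ and invoking the strong Markov property again yields the asserted finite-dimensional convergence to $m$.

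\textbf{Location statistics (part 2).} The key is that once part 1 is established, conditional on $\{X_{s/h_\e}^\e \in D^\iota\}$ and on no transition occurring in a macroscopically small window around time $t/h_\e$, the process is entirely driven by the deterministic flow on a natural-time window of length $2 r_\e/h_\e$, which tends to infinity because $r_\e h_\e^{-1}\to\infty$. Uniformizing the observation over $\sigma$ turns the expectation of $\psi(X_{(t+\sigma r_\e)/h_\e}^\e)$ into a time average
\begin{equation*}
\frac{h_\e}{2 r_\e}\int_{(t-r_\e)/h_\e}^{(t+r_\e)/h_\e} \psi(X_{u,x}^\e)\, du,
\end{equation*}
and conditional on being in $D^\iota$ well before and after this window (which has probability $\PP_{\iota_0}(m_t=\iota\mid m_s=\iota_0)$-close asymptotics by part 1), the ergodic assumption \eqref{eq: ergodicity} applied along the deterministic flow through the (compactly located) starting point yields convergence to $\int \psi\, dP^\iota$. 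Marginalizing over the index $m_t$ via part 1 gives the stated limit.

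\textbf{Main obstacle.} The delicate point, and the real work, is the coupling between the stochastic exit behavior and the deterministic relaxation: one must show that on the rescaled time scale the process does not fritter away mass near the separatrices and does not accumulate many fast successive big jumps during a relaxation window. The first is handled by \eqref{eq: non-degenerate separatrix}, which together with (S.1) ensures the post-jump distribution charges no boundary mass; the second by the fact that $\e$-scaled big-jump arrivals occur at rate $O(h_\e)$, so the probability of two of them in a window of length $O(1)$ natural time vanishes. Combining these with uniform-in-initial-condition exit asymptotics on relaxation-target compacta is where most of the technical effort lies; once that is done, part 1 is standard semigroup-of-Markov-chain convergence and part 2 reduces to the ergodic theorem along deterministic orbits.
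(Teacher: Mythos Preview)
Your overall strategy matches the paper's: iterate the single-exit results of \cite{HoePav-14} via the strong Markov property to get the finite-dimensional convergence, and for part~2 rewrite the randomized observation as a long time average and invoke the ergodicity assumption \eqref{eq: ergodicity}. The paper carries this out exactly along these lines, including your Lemma-level split of part~2 into (i) convergence of the time average $\frac{h_\e}{2r_\e}\int_0^{2r_\e/h_\e}\psi(X^\e_s)\,ds$ to $\int\psi\,dP^\iota$ uniformly over initial points, and (ii) stability of the basin probability under a shift by $r_\e/h_\e$.

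There is, however, one genuine technical point you underestimate. You invoke the exit results of \cite{HoePav-14} directly for the domains $D^\iota$, but Theorem~2.1 there assumes the vector field is inward pointing on $\partial D$, which typically fails for the characteristic separatrices of a Morse--Smale system: trajectories can hug $\partial D^\iota$ for arbitrarily long before relaxing, so the uniformity you need over ``compacts of $D^{\iota_0}$'' is not available from \cite{HoePav-14} as stated. Your proposed fix --- that \eqref{eq: non-degenerate separatrix} ensures the post-jump distribution charges no boundary mass --- is necessary but not sufficient; it controls where the jump lands, not how long the subsequent relaxation takes. The paper resolves this by constructing \emph{flow-adapted reduced domains} $D^{\iota,R}_{\delta}$ and $D^{\iota,R}_{\delta,\delta'}$ (positively invariant sets obtained by excising the backward orbit of a $\delta$-tube around $\partial D^\iota$), on which the exit theorem does apply uniformly, and then running a two-step limit: first $\e\to 0$ to an approximating chain $m^{\delta^*}$ on $\{1,\dots,\kappa\}\cup\{0\}$ with a cemetery state absorbing the mass lost to the excised tubes, and then $\delta^*\to 0$ using \eqref{eq: non-degenerate separatrix} to kill the cemetery rate. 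This reduced-domain/cemetery-state apparatus is precisely the ``real work'' you allude to in your final paragraph; without it your induction step in part~1 does not close, because the landing point after a jump may lie in $D^j$ arbitrarily close to $\partial D^j$ and you cannot restart the exit asymptotics uniformly from there.
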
 

\begin{exa} 
We consider a damped low-friction Duffing equation
\ba
\label{eq:d}
\ddot{x}_t+\delta \dot x_t-U'(x_t)=0, \qquad \delta>0,
\ea
where $U(x)=\frac{x^4}{4}-\frac{x^2}{2}$ is a standard quartic potential.
We rewrite the equation \eqref{eq:d} as a system of two ODEs and perturb it by the multiplicative two-dimensional $\alpha$-stable 
L\'evy noise in the Marcus sense resulting in the 
two-dimensional SDE
\[
X^\e_t=x+\int_0^t f(X_s^\e)\, ds + \e \int_0^t G(X_s^\e)\diamond dZ_s,\\
\]
where
\begin{align*}
f(u)= 
\begin{pmatrix}
 u_2\\
-\delta u_2+ U'(u_1)
\end{pmatrix},\quad 
G(u)=\begin{pmatrix}
 0 & u_2\\
 u_1& 0
\end{pmatrix}
\end{align*}
The process $Z$ has the L\'evy measure $\nu(dz)=\frac{\alpha}{2\pi}\|z\|^{-2-\alpha}\bI(z\neq 0)dz$, where
we choose the normalization in such a way that
\[
h_\e=\frac{\alpha}{2\pi}\int_{\|z\|\gqq \frac{1}{\e}} \frac{dz }{\|z\|^{2+\alpha}}=\e^\alpha,\quad \e>0. 
\]
The unperturbed dynamical system $\dot u=f(u)$ has two stable point attractors $\fs_\pm=(\pm 1,0)$ with the domains of attraction 
$D_\pm$ separated by the
separatrix consisting of two branches which are particular
solutions of the ODE
\[
dy_\pm(t)=-f(y_\pm(t))\,dt
\]
with
$y_\pm(0)=0$ and $\dot y_\pm(0)=(1,\pm \lambda)$, with
\[
\lambda=\frac{\delta -\sqrt{\delta^2+4}}{2}.
\]
The form of the supplementary Marcus flow $\psi^z(x)$, see \eqref{eq:suppl}, is 
found explicitly. For for the attractors $x=\fs_\pm=(\pm 1,0)$ we get
\begin{equation*}
\psi^z(\fs_\pm)=
\begin{cases}
&\begin{pmatrix}
\pm\cosh\sqrt{z_1 z_2}\\
\pm\operatorname{sign} z_1 \sqrt{\frac{z_1}{z_2}} \sinh\sqrt{z_1 z_2} 
              \end{pmatrix},\quad z_1z_2> 0;\\
&\begin{pmatrix}
\pm\cos\sqrt{|z_1 z_2|}\\
\pm\operatorname{sign} z_1 \sqrt{\big|\frac{z_1}{z_2}\big|} \sin\sqrt{|z_1 z_2|} 
              \end{pmatrix}, \quad z_1z_2< 0;            
\\
&\begin{pmatrix}
\pm 1\\
\pm z_1
 \end{pmatrix}, \quad z_2= 0.
\end{cases}
\end{equation*}
We define the sets of jump increments which lead to a transition from $\fs_\pm$ to $D_\mp$ as
\[
E^{\pm}:=\{z\in\RR^2\colon \psi^z(\fs_\pm)\in D_\mp\}
\]
Then on the time scale $\frac{t}{\e^\alpha}$, 
the perturbed Duffing system $X^\e(\cdot /\e^\alpha)$ converges to a Markov chain $m(\cdot)$ in the sense of finite dimensional 
distributions where $m=(m(t))_{t\gqq 0}$ has the state space $\{\fs_-,\fs_+\}$ and the generator
\[
\qQ=\begin{pmatrix}
     -Q^-&Q^-\\
     Q^+&-Q^+
    \end{pmatrix}\quad \text{with}\quad 
Q^\pm:=\frac{\alpha}{2\pi}\int_{E^\pm}\frac{dz}{\|z-\fs_\pm\|^{2+\alpha}}. 
\]
\end{exa}

\begin{exa} 
In \cite{MorGol84},  Moran and Goldbeter considered a nonlinear model of a biochemical system with two oscillatory
domains which includes two variables: the substrate and product concentrations $u_1$ and $u_2$. Those time
evolution is governed by the equation $\dot u=f(u)$ which, for a particular choice of parameters, takes the form 
\ba
\label{eq:gm}
f(u)&=\begin{pmatrix}
      v+1.3\frac{ (u_2)^4}{K^4+(u_2)^4} -10\phi (u)\\
      10\phi (u) -0.06 u_2-1.3\frac{(u_2)^4}{10^4+(u_2)^4}
     \end{pmatrix},
     \quad  v=0.255,\\
\phi(u)&=\frac{u_1(1+u_1)(1+u_2)^2}{5\cdot 10^6+(1+u_1)^2(1+u_2)^2}.
\ea
The parameter $v\in\bR$ denotes the normalized input of substrate. It was shown in \cite{MorGol84} that
this system enjoys the property of \textit{birhythmicity}, that is the coexistence of two nested stable limit cycles, 
see Fig.~\ref{fig:fig-gm}(a). 
The inner and outer cycles have periods $\tT_\text{i}\approx 327$ and  $\tT_\text{o}\approx 338$ respectively.
Domains of attraction $D_\text{i}$ and $D_\text{o}$ are separated by an unstable cycle. 
Denote the parametrizations of the cycles by 
$\phi_\text{i}=(\phi_\text{i}^1(s),\phi_\text{i}^2(s))_{s\in[0,\tT_\text{i})}$ and 
$\phi_\text{o}=(\phi_\text{o}^1(s),\phi_\text{o}^2(s))_{s\in[0,\tT_\text{o})}$.

An addition of a certain quantity of the substrate, i.e.\ an instant increase of $v$ causes a 
switch between two stable oscillatory regimes.
Perturbations of the system \eqref{eq:gm} by additive Gaussian white noise were studied in \cite{LC98}. 

\begin{figure}    
\begin{center}
% (a) \fbox{\includegraphics[width=6cm]{duffing.eps}} \qquad
(a) \fbox{\includegraphics[height=4.3cm]{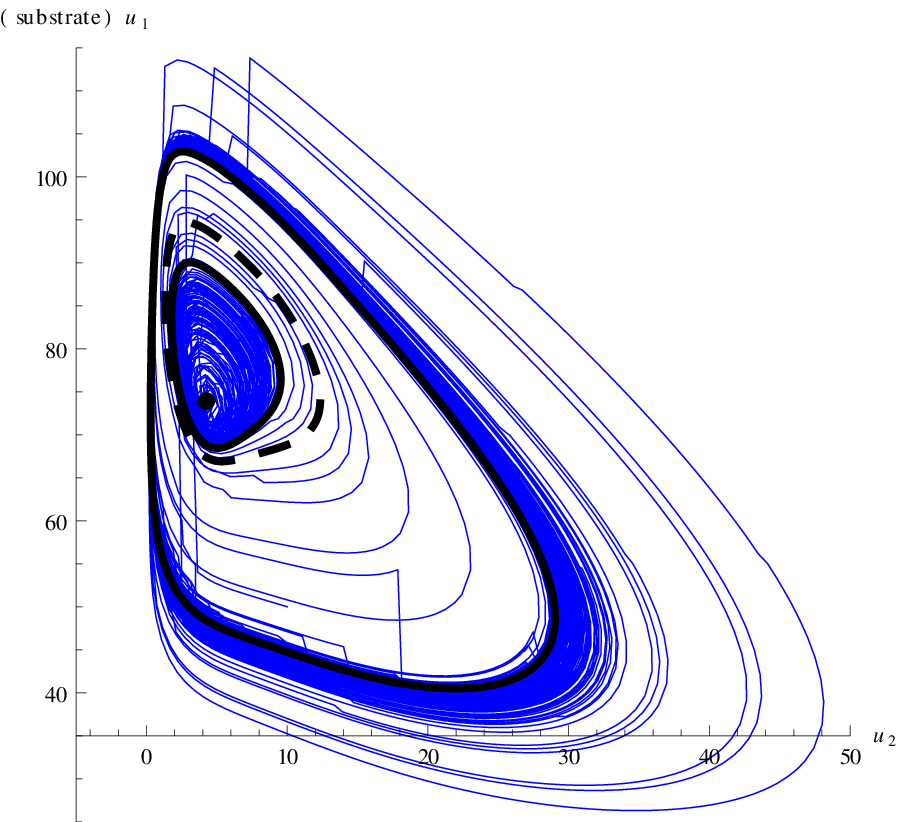}}
(b)
\fbox{\includegraphics[height=4.3cm]{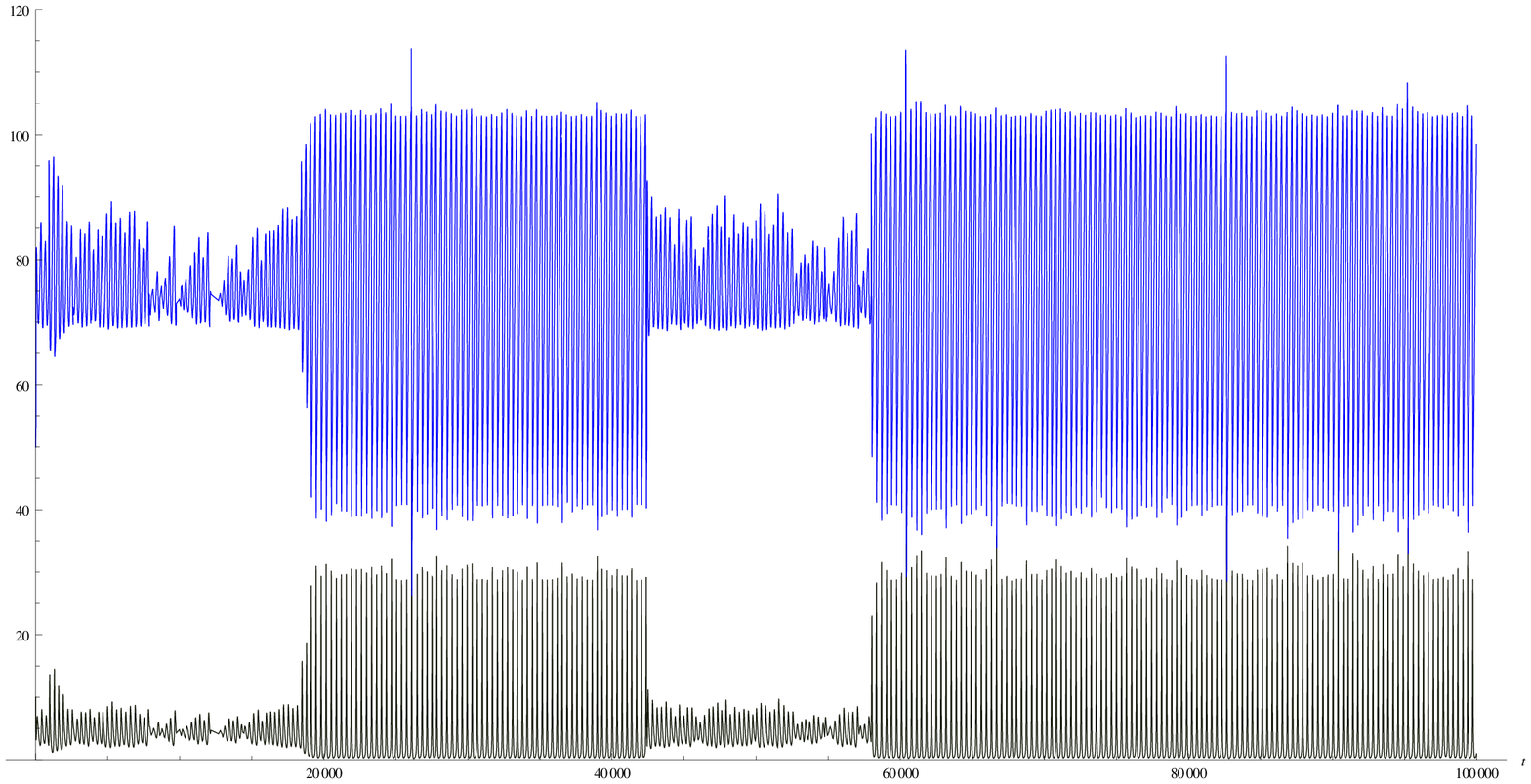}}
\end{center}
\caption{
(a) Coexisting nested stable cycles in the model of an autocatalytic reaction by Moran and Goldbeter \cite{MorGol84}. 
A heavy-tailed L\'evy perturbation of the substrate input
enables instant switchings between different ranges of 
substrate and product concentrations. 
(b) Random switching between periodic regimes of the substrate (blue) and the product (green) concentrations.
\label{fig:fig-gm}}
\end{figure} 

We perturb the parameter $v$ by a L\'evy process $Z$ which is a compound Poisson process with the Pareto jump measure
$\nu(dz)=\alpha z^{-1-\alpha}\bI(z\gqq 1)$, $\alpha>0$. We obtain the time scale rate
\[
h_\e=\alpha\int_{z>\frac{1}{\e}} \frac{dz}{z^{1+\alpha}}=\e^\alpha,\quad 0<\e<1.
\]
and the limiting self-similar Radon measure 
\[
\mu(dz)=\alpha\frac{\bI(z>0)}{z^{1+\alpha}}\, dz,\quad \alpha>0.
\]
On the time scale $\frac{t}{\e^\alpha}$, 
transitions between the cycles occur according to a law of the Markov chain $m$ on the state space 
$\{\text{i},\text{o}\}$ with the generator
\[
\qQ=\begin{pmatrix}
-Q^\text{i}&~Q^\text{i}\\
~Q^\text{o}&-Q^\text{o}
\end{pmatrix}
\]
where
\[ 
Q^\text{i}:=\frac{\alpha}{\tT_\text{i}}
\int_{0}^{\tT_{\text{i}}} \int_{1}^\infty \frac{\bI_{D_\text{o}}(z-\phi_\text{i}^1(s))}{|z-\phi_\text{i}^1(s)|^{1+\alpha}}\,dz\, ds
\quad\text{and}\quad
Q^\text{o}:=\frac{\alpha}{\tT_\text{o}}\int_{0}^{\tT_\text{o}} \int_1^\infty 
\frac{\bI_{D_\text{i}}(z-\phi_\text{o}^1(s))}{|z-\phi_\text{o}^1(s)|^{1+\alpha}}\,dz\,ds.
\]
It is clear from the phase portrait of the system $\dot u=f(u)$ that the area of the attraction basin $D_\text{i}$ 
is much smaller than the area of $D_\text{o}$ and thus 
$Q^\text{o}\ll Q^\text{i}$. Consequently, the system will spend 
most of the time in the vicinity of one of the stable cycles, preferably near the outer one, see Fig.~\ref{fig:fig-gm}(b). 
Any concrete measurement of 
concentrations will yield a random variable with the law 
$P^\text{i}$ or $P^\text{o}$ supported on the cycles, see \eqref{eq: ergodicity}. 
\end{exa}

\section{Proof}
\subsection{Preliminary results on the asymptotic first exit time }

The proof of the main Theorem \ref{thm: metastability} is based on a result about the first exit times of a perturbed system
from a domain $D$ around an attractor formulated in Theorem 2.1 in \cite{HoePav-14}. 
This result holds for deterministic vector fields $f$ which are inward pointing at the boundary of the bounded domain $D$. 
For general Morse--Smale systems this condition turn out to be too restrictive, since the boundary of domains of attraction 
$D^\iota$ is typically \textit{characteristic}, that is the vector field close to the separating manifold acts tangentially. 
Hence there are trajectories in the domain $D^\iota$ 
which may stay close to the separatrix for an uncontrollably long time  until they eventually converge to the attractor.
The proof of the Theorem 2.1 in \cite{HoePav-14} does not use precisely that the vector field is inward pointing, 
but rather the implication that a small reduction of the domain of attraction is still positively invariant 
and that all trajectories starting in the reduced domain are close to the attractor all together in time. 

Here we present another
construction of the reduced domains of attraction which is applicable to our setting.
It aims at avoiding the 
very slow dynamics near the characteristic boundary of the domain of attraction
and will not change the essential behavior of the stochastic system. 
An analogous construction had been carried out in Chapter 2.2.1 of \cite{DHI13}, 
for parabolic PDEs in the context of analysis of perturbed reaction-diffusion equations, 
with the additional difficulty that the latter do not have a backward flow. 

We fix $R>0$ and $\delta>0$ and consider the $\delta$-tube around the boundary $\pd D^\iota$ intersected with $ D^\iota \cap \iI_{R}$, 
namely
\[
\mM^{\iota, R}_\delta := \bigcup_{y\in \pd D^\iota} B_\delta(y) \cap D^\iota \cap \iI_{R}.
\]
 Then the set 
\[
\MM^{\iota, R}_\delta :=  \bigcup_{t\gqq 0} \phi_{-t}(\mM^{\iota, R}_\delta)
\]
denotes all initial values $x$ such that for some time $t\gqq 0$ the forward flow $\phi_t(x)$ enters $\mM^{\iota, R}_\delta$. 
We define the flow-adapted reduced domain of attraction 
\[
D^{\iota, R}_\delta := (D^\iota \cap \iI_R) \setminus \MM^{\iota, R}_{\delta}.
\]
For $\delta'>0$, iterating this procedure by replacing $D^{\iota, R}$ by $D^{\iota, R}_\delta$ and obtain further reductions  
\begin{align*}
\mM^{\iota, R}_{\delta, \delta'} &:= \bigcup_{y\in \pd D^{\iota, R}_{\delta}} B_{\delta'}(y) \cap D^{\iota, R}_{\delta},\\
\MM^{\iota, R}_{\delta, \delta'} &:=  \bigcup_{t\gqq 0} \phi_{-t}(\mM^{\iota, R}_{\delta, \delta'}),\\
D^{\iota, R}_{\delta, \delta'} &:= (D^\iota \cap \iI_R) \setminus \MM^{\iota, R}_{\delta, \delta'}.
\end{align*}

The reduced domains  $D^{\iota, R}_\delta$ and $D^{\iota, R}_{\delta, \delta'}$ enjoy the following important properties. 

\begin{lem}\label{lem: reduced domains}
Denote
\[
\delta_0 := \frac12\min_{1\lqq \iota\lqq\kappa }  \dist\Big(K^\iota, \ds\bigcup_{\iota=1}^\kappa \pd D^\iota\Big),\quad 
\text{and}\quad 
\ds R_0 := \inf\Big\{r>0\colon \bigcup_{\iota=1}^\kappa K^\iota \subset B_r(0)\Big\},
\]
and let $\iota \in \{1, \dots, \kappa\}$ be fixed.
\begin{enumerate}
 \item If $0 < \delta < \delta_0$ and $R>R_0$,
 then $\phi_t(D^{\iota, R}_{\delta}) \subset D^{\iota, R}_\delta$ for all $t\gqq 0$. 
 \item 
  If $0 < \delta < \delta_0$, $R>R_0$,
 and additionally $0< \gamma < \delta_0$, then there is $T^* = T^*_{\delta, R, \gamma}>0$ 
such that for all $x\in D^{\iota, R}_{\delta}$ and $t\gqq T^*$
\[
u(t;x) \in B_{\gamma}(K^\iota).
\]
This property corresponds to Remark 2.1 in \cite{HoePav-14}. 

\item If $0< \delta < \delta'< \delta_0$ and $R>R_0$, then $D^{\iota, R}_{\delta'} \subset D^{\iota, R}_{\delta}$.
 \item If $\delta, \delta'>0$ such that $\delta+ \delta'<\delta_0$ and $R>R_0$, then 
$\phi_t(D^{\iota, R}_{\delta, \delta'}) \subset D^{\iota, R}_{\delta, \delta'}$ for all $t\gqq 0$. 
 \item If $\delta, \delta', \delta''>0$ with $\delta'< \delta''$ and $\delta + \delta''< \delta_0$, then 
$D^{\iota, R}_{\delta, \delta'} \subset D^{\iota, R}_{\delta, \delta''}$.
 \item We have 
\[
\bigcup_{\substack{\delta, \delta'>0\\ \delta+\delta'<\delta_0}} D^{\iota, R}_{\delta, \delta'} = D^{\iota} \cap \iI_R.
\]
\end{enumerate}
\end{lem}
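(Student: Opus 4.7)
The plan is to separate the six statements into set-theoretic consequences of the construction (parts 1, 3, 4, 5) and two genuinely dynamical claims (parts 2 and 6).

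For the forward invariance statements (parts 1 and 4), the key observation is that $\MM^{\iota,R}_\delta=\bigcup_{s\geq 0}\phi_{-s}(\mM^{\iota,R}_\delta)$ is, by construction, closed under the backward flow: if $\phi_t(x)\in\MM^{\iota,R}_\delta$ for some $t\geq 0$, then $\phi_t(x)=\phi_{-s}(y)$ with $y\in\mM^{\iota,R}_\delta$ and $s\geq 0$, so $x=\phi_{-(s+t)}(y)\in\MM^{\iota,R}_\delta$. Contrapositively, $x\notin\MM^{\iota,R}_\delta$ forces $\phi_t(x)\notin\MM^{\iota,R}_\delta$ for every $t\geq 0$. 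Combined with forward invariance of $D^\iota$ (attractor property of $K^\iota$) and of $\iI_R$ (inward-pointing $f$ on $\partial\iI_R$, assumption 3 of (D.1)), this gives $\phi_t(D^{\iota,R}_\delta)\subset D^{\iota,R}_\delta$; the identical argument applied to $\mM^{\iota,R}_{\delta,\delta'}$ yields part 4. Parts 3 and 5 are simple monotonicity: enlarging the tube radius enlarges $\mM$, hence $\MM$, and complementing within $D^\iota\cap\iI_R$ reverses the inclusion (the direction in the printed statement of part 5 appears to swap $\delta'$ and $\delta''$).

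For part 2 note that $D^{\iota,R}_\delta\subset\{x\in D^\iota\cap\iI_R:\dist(x,\partial D^\iota)\geq\delta\}$, a bounded subset of $\RR^d$. Its closure in $\RR^d$ cannot meet $\partial D^\iota$ (the condition $\dist(\cdot,\partial D^\iota)\geq\delta$ is closed), hence $\overline{D^{\iota,R}_\delta}$ is a compact subset of the basin $D^\iota$. The standard attractor theory of \cite{HalKoc91,AV09} guarantees uniform absorption of compact subsets of the basin into every prescribed neighborhood of $K^\iota$, which produces the uniform time $T^*=T^*_{\delta,R,\gamma}$ claimed.

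The substantive content is part 6. Fix $x\in D^\iota\cap\iI_R$. Since $\phi_t(x)\to K^\iota$ and the trajectory stays in $\iI_R$ by assumption 3 of (D.1), the set $\gamma_x\cup K^\iota$, where $\gamma_x:=\{\phi_t(x):t\geq 0\}$, is compact in $D^\iota\cap\iI_R$ and has a strictly positive distance $d:=\dist(\gamma_x\cup K^\iota,\partial(D^\iota\cap\iI_R))>0$. Any $\delta<\min(d,\delta_0)$ then gives $x\in D^{\iota,R}_\delta$. The main obstacle is to show $\gamma_x$ lies in the \emph{interior} of $D^{\iota,R}_\delta$, because only then can a further $\delta'$-tube reduction still contain $x$. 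One argues by combining continuous dependence with uniform absorption from part 2: choosing $\gamma=d/4$ in part 2 provides $T^*$ after which every $z\in D^{\iota,R}_\delta$ satisfies $\phi_t(z)\in B_{d/4}(K^\iota)$, while on the compact interval $[0,T^*]$ uniform continuity of the flow implies that a sufficiently small open ball around each $\phi_t(x)$ is mapped into a $d/2$-neighborhood of $\gamma_x$, which still sits at distance $>\delta$ from $\partial D^\iota$. Together this yields an open neighborhood of $\gamma_x$ contained in $D^{\iota,R}_\delta$, and compactness of $\gamma_x$ then gives $\eta:=\dist(\gamma_x,\partial D^{\iota,R}_\delta)>0$. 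Any $\delta'<\min(\eta,\delta_0-\delta)$ places $x$ in $D^{\iota,R}_{\delta,\delta'}$, establishing the claim.
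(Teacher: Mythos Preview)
Your proof is correct and follows essentially the same route as the paper's. For parts 1--5 the arguments coincide almost verbatim: the paper also rewrites $D^{\iota,R}_\delta$ via \eqref{eq: representation of the r.d.o.a.} and uses backward-invariance of $\MM^{\iota,R}_\delta$ together with forward invariance of $D^\iota$ and $\iI_R$, invokes the attracting property of $K^\iota$ for compact subsets of the basin for part~2, and dismisses parts 3--5 as monotonicity. Your observation that the inclusion in part~5 is stated with $\delta'$ and $\delta''$ swapped is correct.

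For part~6 the paper argues by contradiction (defining a first hitting time $T_\delta$ of the tube and showing $T_\delta\to\infty$ as $\delta\to 0$, else the orbit would meet $\partial D^\iota$), which is just the contrapositive of your direct compactness argument that $\overline{\gamma_x}\cup K^\iota$ sits at positive distance from $\partial D^\iota$. For the second reduction the paper merely says ``the same reasoning holds analogously''; your explicit argument via part~2 and continuous dependence is what actually justifies that sentence. One small point of care: when you invoke part~2 for ``every $z\in D^{\iota,R}_\delta$'' you have not yet shown that the nearby points $z$ lie in $D^{\iota,R}_\delta$, so the argument is cleanest if you first use continuous dependence on $[0,T^*]$ to bring $\phi_{T^*}(z)$ into a small forward-invariant neighborhood of $K^\iota$ (which exists by the attractor property and lies well inside $D^{\iota,R}_\delta$), and then conclude for $t\geq T^*$ from forward invariance of that neighborhood rather than from part~2 applied to $z$ directly.
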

\noindent The proof of the Lemma is rather straightforward and postponed to the Appendix. 

Under an appropriate choice of parameters $R$, $\delta$, $\delta'$, $x\in D^{\iota, R}_{\delta, \delta'}$, $\e>0$ 
and $\iota \in \{0, \dots, \kappa\}$ we define the time  
\[
\TT_x^{\iota, R}(\e) := \inf\{t>0\colon X^\e_{t,x} \notin D^{\iota, R}_{\delta}\}.
\]
The next Theorem \ref{thm: first exit times}
is based on the Theorem 2.1 in \cite{HoePav-14} and deals with the behavior of $\TT_x^{\iota, R}(\e)$ in the limit of small $\e$. 
We will use the following version of Theorem 2.1 in \cite{HoePav-14} slightly adapted to our setting.

\begin{thm}[The exit problem of $X^\e$]\label{thm: first exit times}
Let Hypotheses \emph{\textbf{(D.1)}} and \emph{\textbf{(S.1-2)}} be fulfilled. Choose $R>R_0$, 
$\iota \in \{1, \dots, \kappa\}$ and $\delta, \delta'>0$ with $\delta+\delta'< \delta_0$. 
If $Q^\iota(\partial D^{\iota, R}_\delta) = 0$ and $Q^\iota((D^{\iota, R}_\delta)^c) >0$, then we have 
for any $\theta>0$
and $U\in \mathfrak{B}(\RR^d)$ 
satisfying $Q(\partial U) =0$ that  
\begin{align}\label{eq: first exit convergence}
\lim_{\e\to 0} \sup_{y\in D^{\iota, R}_{\delta, \delta'}}
\Big|\EE_y\Big[ e^{-\theta Q((D^{\iota, R}_{\delta})^c) h_\e  \TT^{\iota, R}(\e)} 
\ind\{X^\e_{\TT^{\iota, R}(\e)}\in U\}\Big]-
\frac{1}{1+ \theta} \frac{Q\big(U \cap (D^{\iota, R}_{\delta})^c\big)}{Q\big((D^{\iota, R}_\delta)^c\big)} \Big|=0.
\end{align}
\end{thm}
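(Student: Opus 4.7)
The plan is to reduce the statement to Theorem 2.1 of \cite{HoePav-14}, applied to the flow-invariant surrogate domain $D^{\iota, R}_\delta$, which substitutes for the inward-pointing-boundary assumption of the original formulation. Two ingredients make this reduction possible. First, by Lemma \ref{lem: reduced domains}(1) the domain $D^{\iota, R}_\delta$ is forward-invariant under the deterministic flow $\phi_t$. Second, Lemma \ref{lem: reduced domains}(2) provides a uniform relaxation time $T^{*}=T^{*}_{\delta,R,\gamma}$ such that $\phi_t(y) \in B_\gamma(K^\iota)$ for every $y \in D^{\iota, R}_{\delta, \delta'}$ and every $t \gqq T^{*}$. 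As indicated in the paragraph preceding the theorem, the argument of Theorem 2.1 in \cite{HoePav-14} relies on precisely these two structural features of the underlying flow and not on the geometric inward-pointing property of the boundary.

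I would carry out the proof in two stages. In the first (relaxation) stage, I fix a small $\gamma>0$ with $\delta + 2\gamma < \delta_0$, take $y \in D^{\iota, R}_{\delta, \delta'}$, and show that $\PP_y\bigl(X^\e_{T^{*}} \in B_{2\gamma}(K^\iota)\bigr) \to 1$ as $\e \searrow 0$, uniformly in $y$. The standard truncation separates the large jumps of $Z$ with $\|z\|>\delta/\e$ from the remainder; the probability of any such jump in $[0, T^{*}]$ is of order $T^{*} h_\e \to 0$, while on the complement the effective driving noise has amplitude $O(\e)$, so by (S.2.1--2) together with Gronwall's inequality one obtains $\|X^\e_{T^{*},y} - \phi_{T^{*}}(y)\| = O(\e^\beta)$ for a suitable $\beta>0$, uniformly in $y \in D^{\iota, R}_{\delta, \delta'}$.

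In the second (exit) stage, I would apply the strong Markov property at time $T^{*}$. By Lemma \ref{lem: reduced domains}(5), $B_{2\gamma}(K^\iota) \subset D^{\iota, R}_{\delta, 2\gamma}$, so Theorem 2.1 of \cite{HoePav-14}, adapted to the flow-invariant domain $D^{\iota, R}_\delta$ as explained above, applies uniformly to starting points in $B_{2\gamma}(K^\iota)$ and yields the limit on the right-hand side of \eqref{eq: first exit convergence}. Because $h_\e T^{*} \to 0$, the time shift by $T^{*}$ does not affect the renormalized exit time. The hypothesis $Q^\iota((D^{\iota,R}_\delta)^c) > 0$ ensures that the exit indeed occurs on the correct scale $h_\e^{-1}$, whereas the continuity-set conditions $Q^\iota(\partial D^{\iota,R}_\delta) = 0$ and $Q^\iota(\partial U) = 0$ are needed so that the rescaled L\'evy measure assigns, in the limit, the correct mass to $(D^{\iota,R}_\delta)^c$ and to $U \cap (D^{\iota,R}_\delta)^c$ under the weak convergence in \eqref{eq: regular variation}.

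The hard part will be the careful verification that the argument of Theorem 2.1 in \cite{HoePav-14} genuinely transfers to the non-smooth, characteristic-type boundary of $D^{\iota, R}_\delta$. One must inspect each step of the original proof, in particular the comparison of the stochastic trajectory with the deterministic flow between consecutive rare large jumps and the identification of the single-jump-exit event, and confirm that only the forward invariance and uniform attraction to $K^\iota$ supplied by Lemma \ref{lem: reduced domains}(1--2) are being used, while no transversality or smoothness of $\partial D^{\iota, R}_\delta$ is invoked.
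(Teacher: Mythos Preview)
Your proposal is correct and matches the paper's own treatment. The paper does not give a separate proof of this theorem: it is presented as ``a version of Theorem 2.1 in \cite{HoePav-14} slightly adapted to our setting,'' the adaptation being precisely that the inward-pointing boundary hypothesis is replaced by the forward invariance and uniform-attraction properties of the reduced domains $D^{\iota,R}_\delta$ and $D^{\iota,R}_{\delta,\delta'}$ supplied by Lemma~\ref{lem: reduced domains}(1)--(2), which you have identified correctly. One small correction: the inclusion $B_{2\gamma}(K^\iota)\subset D^{\iota,R}_{\delta,\delta'}$ (for $\gamma$ small enough) follows directly from the construction of the reduced domain, not from item~(5) of Lemma~\ref{lem: reduced domains}, which is a monotonicity statement in $\delta'$.
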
 
\noindent This result implies that under the previous assumptions the first exit times and the first exit location behave as 
\begin{align*}
&h_\e\; Q^\iota((D^{\iota, R}_\delta)^c)\; \TT_x^{\iota, R}(\e) \stackrel{d}{\lra} \mbox{EXP}(1),\\
&\PP_x\Big(X_{\TT^{\iota, R}(\e)}^\e \in U\Big) 
\to \frac{Q^\iota\big(U\cap (D^{\iota, R}_\delta)^c\big)}{Q^\iota\big((D^{\iota, R}_\delta)^c\big)},
\end{align*}
in the limit $\e\to 0$,
where the convergence is uniform over all initial values $x \in D^{\iota, R}_{\delta, \delta'}$. These results 
allow the construction of a jump process, which converges weakly to an approximating continuous time Markov chain
$m$ with the generator \eqref{eq:Q}. 

\subsection{Proof of Theorem \ref{thm: metastability}}

Fix the error constant $\delta^*>0$. In the first step we fix the parameters $R$, $\delta$ and $\delta'$ accordingly 
and construct an approximating Markov chain.  

\paragraph{1. Approximating Markov chains.} 
The limiting measure $\mu$ of the regularly varying L\'evy measure $\nu$ 
given in \eqref{eq: Levy-Ito} is a Radon measure. 
We recall the definition of $R_0$ in Lemma \ref{lem: reduced domains}. 
We may fix a radius $R>R_0$, depending only on $\delta^*$, 
such that 
\[
\max_{\iota = 1, \dots, \kappa} Q^\iota(\iI_R^c) < \frac{\delta^*}{2}.  
\]
In addition, by compactness of $\iI_R$ we may fix one after the other, $\delta>0$ and $\delta'>0$, 
with $\delta+\delta'< \delta_0 = \frac12\min_{\iota =1, \dots, \kappa} \min\{\dist(K^\iota, \pd D^\iota), \dist(x, \dD^\iota)\}$
such that 
\[
\max_{\iota =1, \dots, \kappa} 
Q^\iota\Big(\iI_R \setminus \bigcup_{\ell =1}^{\kappa} D^{\ell, R}_{\delta, \delta'}\Big) < \frac{\delta^*}{2}, 
\]
where $\delta = \delta(R, \delta^*)$ and $\delta' =\delta'(R, \delta^*, \delta)$. 
Combining the previous two inequalities we obtain that
\begin{equation}\label{eq:delta star}
\max_{\iota =1, \dots, \kappa} Q^\iota\Big(\RR^d \backslash \bigcup_{\ell =1}^\kappa D^{\ell, R}_{\delta, \delta'}\Big) < \delta^*.
\end{equation}
We lighten the notation.  
For $\delta^*>0$ and the dependent parameters 
$R$, $\delta$, and $\delta'$ fixed we write shorthand 
$\ha D^\iota = D^{\iota, R}_{\delta}$ and $\ti D^\iota = D^{\iota, R}_{\delta, \delta'}$. 
Furthermore we use $A^c := \RR^d \setminus A$ for any $A \subset \RR^d$. 

Denote by $m^{\delta^*}= (m^{\delta^*})_{t\gqq 0}$ a continuous time 
Markov chain with values in the set of indices $\{1, \dots, \kappa\}\cup\{0\}$
enlarged by the absorbing cemetery state $0$ with the generator $\qQ^{\delta^*}$ given by 
% \begin{small}
\begin{align*}
 \qQ^{\delta^*} :=
&\left(
\begin{array}{ccccc} 
-Q^1\left(\big(\ha D^{1}\big)^c\right)  
& Q^1\left(\ti D^{2}\right) & \dots & Q^1\left(\ti D^{\kappa}\right) 
& \displaystyle Q^1\Big(\big(\ha D^1 \cup \bigcup_{\iota = 2}^\kappa \ti D^{\iota} \big)^c\Big)  \\ 
\vdots&&&\vdots \\
Q^{\kappa} \left(\ti D^{1}\right)  & \dots  & Q^{\kappa}\left(\ti D^{\kappa-1}\right) 
& -Q^{\kappa}\Big(\big(\ha D^{\iota_\kappa}\big)^c\Big) & \displaystyle
Q^{\kappa}\Big(\big(\ha D^\kappa \cup \bigcup_{\iota = 1}^{\kappa-1} \ti D^{\iota}\big)^c\Big) \\[2mm]
0& 0 &\dots & 0 & 0
\end{array} \right).
\end{align*}
For $\qQ$ defined in (\ref{eq:Q}) we construct the matrix
\[
\qQ^0 := \begin{pmatrix} \qQ & 0 \\ 0 & 0 \end{pmatrix} 
\]
and denote by $m^0$ a continuous-time Markov chain on the state space $\{1, \dots, \kappa\} \cup \{0\}$ 
with the generator $\qQ^0$. 
As a consequence of (\ref{eq:delta star}) we have 
\[
\max_{i, j} |\qQ^{\delta^*}(i,j) - \qQ^0(i,j)| < \delta^*. 
\]
This implies that $m^{\delta^*} \ra m^0$ as $\delta^* \searrow 0$ in the sense of finite dimensional distributions. 
Note that the transition rate to the cemetery state $0$ tends to $0$ as $\delta^*\searrow 0$ due to \eqref{eq:delta star}.

\paragraph{2. Transition probabilities.}
Let $N\gqq 1$, $\iota_0,\dots, \iota_N \in \{1, \dots, \kappa\}\cup \{0\}$, $x\in \ti D^{\iota}$ and $0< s_1 < \dots < s_N$. 
Let us show that
\begin{align}
\label{eq: delta approx fddX}
\lim_{\e \ra 0+} \PP_x\Big(X^\e_{\frac{s_1}{h_\e}}\in \ti D^{\iota_1}, 
\dots, X^\e_{\frac{s_N}{h_\e}}\in \ti D^{\iota_N} \Big) = 
\PP_{\iota_0}(m^{\delta^*}_{s_1} = \iota_1, \dots, m^{\delta^*}_{s_N} = \iota_N).
\end{align}
Since $0$ is an absorbing state, we can restrict ourselves to the states $\{1, \dots, \kappa\}$. 
We first construct an approximating jump process with the help of Theorem \ref{thm: first exit times} 
and define recursively the arrival times $\{T_n^\e\}_{n\gqq 0}$ 
and the random states $\{S_n^\e\}_{n\gqq 0}$ taking values in $\{0, \dots, \kappa\} \cup \{0\}$. 
We fix the initial time and state  
\begin{align*}
&T_0^\e := 0,\qquad S_0^\e:= \sum_{\ell = 1}^\kappa \ell\cdot \ind_{\ti D^{\ell}}(x).
\end{align*}
For $n\in \NN$ we set 
\begin{align*}
&T_{n+1}^\e := 
\begin{cases} \displaystyle
\inf\Big\{t>T_n^\e\colon X^\e_{t, x}\in \bigcup_{\substack{\ell =1\\ \ell \neq S_n}}^k \ti D^{\ell} \Big\}, 
& \ds \mbox{ if }S_n^\e \in \bigcup_{\ell=1}^\kappa \ti D^{\ell}, \\
\infty, & \ds\mbox{ if }S_n^\e \notin \bigcup_{\ell=1}^\kappa \ti D^{\ell},
\end{cases}\\
&S_{n+1}^\e := 
\begin{cases} 
\ds\sum_{\ell = 1}^\kappa \ell\cdot \ind_{\ti D^{\ell}}(X^\e_{T_{n+1}, x}),  &\ds \mbox{ if }T_{n+1}^\e < \infty, \\
\ds0, &\ds \mbox{ if } T_{n+1}^\e =\infty. 
\end{cases}
\end{align*}
We define the approximating jump process 
\begin{align*}
M^{\e, \delta^*}_t := \sum_{n=0}^\infty S_n^\e\cdot  \ind\big\{T_n^\e\lqq \frac{t}{h_\e} < T_{n+1}^\e\big\}.
\end{align*}
The convergence in \eqref{eq: delta approx fddX} can be expressed conveniently in terms of $M^{\e, \delta^*}$ 
as follows 
\begin{align}\label{eq: delta approx fdd}
\lim_{\e \ra 0+} \PP_x\Big(M^{\e, \delta^*}_{\frac{s_1}{h_\e}}=\iota_1, 
\dots, M^{\e, \delta^*}_{\frac{s_N}{h_\e}}=\iota_N \Big) = 
\PP_{\iota_0}(m^{\delta^*}_{s_1} = \iota_1, \dots, m^{\delta^*}_{s_N} = \iota_N).
\end{align}
Following for instance Lemma 2.12 and Lemma 2.13 in Xia \cite{Xia-92}, the convergence 
\[
M^{\e, \delta^*} \to m^{\delta^*} \quad \mbox{ as }\e\to 0 
\]
in the sense of finite dimensional distributions
it is equivalent to convergence 
\begin{align*}
(T_{k}^\e, S_k^\e)_{0\lqq k\lqq  n} \stackrel{d}{\lra} (T_k, S_k)_{0\lqq k\lqq  n}, 
\end{align*}
for any $n\in \NN$, where $T_k$ is the $k$-th arrival time for the Markov chain $m^{\delta^*}$ and 
$S_k = m^{\delta^*}_{\tau_k}$. 
This is equivalent to the following statement. 
For indices $\iota_0, \iota_1, \dots \iota_n \in \{1, \dots, \kappa\}$, 
\marginpar{can $\iota_k=0$?}
with $\iota_{k}\neq \iota_{k+1}$, $k\in \{0, \dots, n-1\}$, $u_1,\dots, u_n\gqq 0$, 
and an initial value $x \in \ti D^{\iota_0}$ we have 
\begin{align}
\EE_x\Big[e^{-u_1 T_1^\e - \dots - u_n (T_n^\e-T_{n-1}^\e)} \cdot
&\ind\{S_1^\e = \iota_1, \dots, S_n^\e = \iota_n\}\Big]\nonumber\\
&\qquad \stackrel{\e\to 0}{\lra} 
\prod_{j=0}^{n-1}
\frac{Q^{\iota_j}\big((\ha D^{\iota_j})^c\big)}{Q^{\iota_j}\big((\ha D^{\iota_j})^c\big)+u_{j+1}} 
\cdot \frac{Q^{\iota_j}\big(\ti D^{\iota_{j+1}}\big)}{Q^{\iota_j}\big((\ha D^{\iota_j})^c\big)}.
\label{eq: finite exit time results}
\end{align}
This implies the desired convergence of finite dimensional distributions \eqref{eq: delta approx fdd}.
To prove the convergence in \eqref{eq: finite exit time results} we use the strong Markov property of $X^{\e}$ for the 
following recursive estimate 
\begin{align*}
&\EE_x\Big[e^{-u_1 T_1^\e - \dots - u_n (T_n^\e-T_{n-1}^\e)}\cdot 
\ind\{S_1^\e = \iota_1, \dots, S_n^\e = \iota_n\}\Big]\\
&= \EE_x\Big[\EE\Big[e^{-u_1 T_1^\e - \dots - u_n (T_n^\e-T_{n-1}^\e)}\cdot 
\ind\{S_1^\e = \iota_1, \dots, S_n^\e = \iota_n\}\Big|\fF_{T_1^\e}\Big]\Big]\\
&= \EE_x\Big[e^{-u_1 T_1^\e}\cdot \ind\{S_1^\e = \iota_1\} \EE\Big[e^{- u_2 (T_2^\e-T_1^\e) - \dots - u_n (T_n^\e-T_{n-1}^\e)} 
\cdot \ind\{S_1^\e = \iota_2, \dots, S_n^\e = \iota_n\}\Big|\fF_{T_1^\e}\Big]\Big]\\
&= \EE_x\Big[e^{-u_1 T_1^\e} \cdot \ind\{X^\e_{T_1^\e} \in \ti D^{\iota_1}\}\; 
\EE_{X^\e_{T_1^\e}}\Big[e^{- u_2 (T_2^\e-T_1^\e) - \dots - u_n (T_n^\e-T_{n-1}^\e)}\cdot 
\ind\{S_1^\e = \iota_2, \dots, S_n^\e = \iota_n\}\Big]\Big]\\
&\lqq \EE_x\Big[e^{-u_1 T_1^\e}\cdot  \ind\{X^\e_{T_1^\e} \in \ti D^{\iota_1}\}\Big] 
\sup_{y\in \ti D^{\iota_1}} 
\EE_{y}\Big[e^{- u_2 T_1^\e - \dots - u_n (T_{n-1}^\e-T_{n-2}^\e)} \cdot \ind\{S_1^\e = \iota_2, \dots, S_n^\e = \iota_n\}\Big].
\end{align*}
We iterate the preceding argument $n-2$ times and obtain the estimate
\begin{align*}
&\EE_x\Big[e^{-u_1 T_1^\e - \dots - u_n (T_n^\e-T_{n-1}^\e)}\cdot 
\ind\{S_1^\e = \iota_1, \dots, S_n^\e = \iota_n\}\Big]\\
&\lqq \EE_x\Big[e^{-u_1 T_1^\e}\cdot  \ind\{X^\e_{T_1^\e} \in \ti D^{\iota_1}\}\Big] 
\prod_{\ell=1}^{n-1} \sup_{y\in \ti D^{\iota_\ell}}\EE_y\Big[e^{-u_\ell T_1^\e} \cdot 
\ind\{X^\e_{T_1^\e} \in \ti D^{\iota_{\ell+1}}\}\Big].
\end{align*}
The same reasoning holds true for the estimate from below 
if we change -mutatis mutandis- the supremum by the infimum. 
The limit \eqref{eq: first exit convergence} in Theorem \ref{thm: first exit times} states that 
\begin{align*}
\sup_{y\in \ti D^{\iota_\ell}}\EE_y\Big[e^{-u_\ell T_1^\e} \cdot \ind\{X^\e_{T_1^\e} \in \ti D^{\iota_{\ell+1}}\}\Big]
\stackrel{\e \to 0}{\lra} \frac{Q^{\iota_j}\big((\ha D^{\iota_j})^c\big)}{Q^{\iota_j}\big((\ha D^{\iota_j})^c\big)+u_{j+1}} 
\cdot \frac{Q^{\iota_j}(\ti D^{\iota_{j+1}})}{Q^{\iota_j}\big((\ha D^{\iota_j})^c\big)}.
\end{align*}
This shows the desired convergence in (\ref{eq: finite exit time results}) and finishes the proof of \eqref{eq: delta approx fdd}. 
Statement 1. of Theorem~\ref{thm: metastability} is proved. 

\paragraph{3. Location of $X^\e$ on the attractor.} We prove the second statement of the Theorem~\ref{thm: metastability}.
Since $X^\e$ is a strong Markov process, 
it is enough to prove the result for $s=0$ and $x\in \ti D^{\iota}$, namely that
\begin{align*}
&\lim_{\e\ra 0+} 
\EE_x\Big[ \psi(X^\e_{\frac{t}{h_\e},x})\Big]
= \EE_\iota\Big[\int_{\RR^d} \psi(v) \, dP^{m^\delta_{t}}(v)\Big].
\end{align*}
Indeed, the Markov property of $X^\e$ yields 
\begin{align}
\EE_x\Big[ \psi(X^\e_{\frac{t+\si r_\e}{h_\e}}) \Big]
&= \EE_x\Big[\frac{h_\e}{2 r_\e} \int_{-\frac{r_\e}{h_\e}}^{\frac{r_\e}{h_\e}} 
\psi(X^\e_{\frac{t+s}{h_\e}}) ds\Big]\nonumber\\
&= \EE_x\Big[\EE\Big[\frac{h_\e}{2 r_\e} \int_{-\frac{r_\e}{h_\e}}^{\frac{r_\e}{h_\e}} 
\psi(X^\e_{\frac{t+s}{h_\e}})\,  ds\Big| \fF_{\frac{t-r_\e}{h_\e}}\Big]\Big]\nonumber\\
&= \EE_x\Big[\EE_{X^{\e}_{\frac{t-r_\e}{h_\e}}}\Big[\frac{h_\e}{2r_\e} \int_{0}^{\frac{2r_\e}{h_\e}} 
\psi(X^\e_{s})\, ds\Big]\Big]\nonumber\\
&\lqq \sum_{\iota =1}^\kappa \EE_x\Big[\EE_{X^{\e}_{\frac{t-r_\e}{h_\e}}}\Big[\frac{h_\e}{2r_\e} 
\int_{0}^{\frac{2r_\e}{h_\e}}\psi(X^\e_{s})\, ds\Big] 
\cdot  \ind_{\ti D^\iota}(X^\e_{\frac{t-r_\e}{h_\e}}) \Big] + \|\psi\|_\infty \delta^*\nonumber\\
&\lqq \sum_{\iota =1}^\kappa \sup_{y\in \ti D^\iota} 
\EE_{y}\Big[\frac{h_\e}{2r_\e} \int_{0}^{\frac{2r_\e}{h_\e}} \psi(X^\e_{s})\, ds\Big]\cdot 
\PP_x\Big(X^\e_{\frac{t-r_\e}{h_\e}} \in \ti D^\iota\Big) + \|\psi\|_\infty \delta^*.
\label{eq: Markov estimate}
\end{align}

We treat the two factors of the summands separately. 

\begin{lem}\label{lem: 1}
Let $\delta^*$, $\delta'$, $\delta>0$ and $R>R_0$ be chosen as above. 
If $0< \gamma < \delta_0$, $\psi \in \cC_b(\RR^d, \RR)$ and $\iota \in \{1, \dots, \kappa\}$ 
then there is a constant $\e_0 >0$ such that for any $\e\in (0, \e_0]$  
\begin{align*}
\sup_{y\in D^{\iota, R}_{\delta, \delta'}} 
\Big|\EE_y\Big[\frac{h_\e}{2r_\e} \int_0^{\frac{2r_\e}{h_\e}} \psi(X^\e_s)\, ds\Big] 
- \int_{K^\iota} \psi(v)\, dP^\iota(v)\Big| \lqq \gamma.
\end{align*}
\end{lem}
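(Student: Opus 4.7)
The plan is to compare the stochastic time average with that of the deterministic flow and then apply the uniform ergodicity assumption \eqref{eq: ergodicity}. Concretely, I would decompose
\[
\EE_y\Bigl[\tfrac{h_\e}{2r_\e}\int_0^{2r_\e/h_\e}\psi(X^\e_s)\,ds\Bigr]-\int_{K^\iota}\psi\,dP^\iota \;=\; \cA^\e(y)+\cB^\e(y),
\]
where $\cB^\e(y):=\tfrac{h_\e}{2r_\e}\int_0^{2r_\e/h_\e}\psi(\phi_s(y))\,ds-\int_{K^\iota}\psi\,dP^\iota$ is the deterministic-ergodic contribution and $\cA^\e(y)$ is the remaining stochastic-deviation term. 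It then suffices to bound both by $\gamma/2$ uniformly in $y\in D^{\iota,R}_{\delta,\delta'}$.

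For $\cB^\e$ I observe that, by construction, the closure $\overline{D^{\iota,R}_{\delta,\delta'}}$ is a closed subset of the interior of $D^\iota\cap\iI_R$: a $\delta$-collar of $\partial D^\iota$ and a $\delta'$-collar of $\partial D^{\iota,R}_\delta$ have been removed. Hence \eqref{eq: ergodicity} furnishes $T_0=T_0(\gamma,\psi)$ with $\sup_{y}\bigl|\tfrac{1}{t}\int_0^t\psi(\phi_s(y))\,ds - \int_{K^\iota}\psi\,dP^\iota\bigr|\lqq\gamma/2$ for all $t\gqq T_0$. Since $r_\e/h_\e\to\infty$, we have $2r_\e/h_\e\gqq T_0$ for all sufficiently small $\e$, and therefore $\sup_y|\cB^\e(y)|\lqq\gamma/2$.

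For $\cA^\e$ I would fix a small $\rho\in(0,1)$ and introduce the first $\e$-large jump time $\tau^\e:=\inf\{t>0:\|\Delta Z_t\|>\rho/\e\}$. By \textbf{(S.1)} it is exponentially distributed with parameter $h(\rho/\e)\sim\rho^{-\alpha}h_\e$, so $\PP(\tau^\e\lqq 2r_\e/h_\e)\lqq 2\rho^{-\alpha}r_\e(1+o(1))\lra 0$ because $r_\e\to 0$. On the event $\{\tau^\e>2r_\e/h_\e\}$ the scaled driver $\e Z$ consists only of the Brownian part, the drift, and compensated jumps of size at most $\rho$; the small-noise Gronwall/martingale estimates underlying the proof of Theorem \ref{thm: first exit times} in \cite{HoePav-14}, combined with the asymptotic stability of $K^\iota$ (Lemma \ref{lem: reduced domains}.2), imply that for every $\eta>0$
\[
\sup_{y\in\overline{D^{\iota,R}_{\delta,\delta'}}}\PP_y\Bigl(\sup_{0\lqq s\lqq 2r_\e/h_\e}\|X^\e_s-\phi_s(y)\|>\eta,\ \tau^\e>2r_\e/h_\e\Bigr)\lra 0.
\]
Uniform continuity of $\psi$ on the compact $\iI_R$ with modulus $\omega_\psi$ then gives $|\cA^\e(y)|\lqq\omega_\psi(\eta)+2\|\psi\|_\infty\PP_y(\text{bad event})$, and choosing $\eta$ small first and $\e$ small second yields $\sup_y|\cA^\e(y)|\lqq\gamma/2$, completing the bound.

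The main obstacle is the uniform small-noise estimate for $X^\e$ on the long window $[0,2r_\e/h_\e]$: a direct pathwise Gronwall bound would produce a factor $\exp(L\cdot 2r_\e/h_\e)$ that diverges as $\e\to 0$. The remedy is to split the window at the deterministic entrance time $T^*=T^*_{\delta,R,\eta/2}$ into $B_{\eta/2}(K^\iota)$ guaranteed by Lemma \ref{lem: reduced domains}.2: on the fixed-length piece $[0,T^*]$ a standard small-noise Gronwall estimate suffices, whereas on $[T^*,2r_\e/h_\e]$ the hyperbolic contractivity at $K^\iota$ inherited from hypothesis (D.1.2) keeps $X^\e$ in a tubular neighborhood of $K^\iota$ without further exponential blow-up. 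Propagating these two estimates carefully is where the real work lies.
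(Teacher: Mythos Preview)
Your decomposition $\cA^\e+\cB^\e$, the treatment of $\cB^\e$ via \eqref{eq: ergodicity}, and the introduction of the large-jump time $\tau^\e$ are all in line with the paper. The genuine gap is in the handling of $\cA^\e$: you need
\[
\sup_{y\in\overline{\ti D^\iota}}\PP_y\Big(\sup_{0\lqq s\lqq 2r_\e/h_\e}\|X^\e_s-\phi_s(y)\|>\eta,\ \tau^\e>\tfrac{2r_\e}{h_\e}\Big)\lra 0,
\]
and you correctly spot that a direct Gronwall bound blows up on the growing window. Your proposed remedy---invoking hyperbolic contractivity of $K^\iota$ on $[T^*,2r_\e/h_\e]$---is neither the paper's route nor obviously sufficient: hyperbolicity of the \emph{deterministic} flow (assumption (D.1.2)) does not by itself guarantee that the \emph{stochastic} process $X^\e$ stays in a tubular neighborhood of $K^\iota$ over a horizon of length $r_\e/h_\e\to\infty$. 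One would need a quantitative Lyapunov-type stability result for the perturbed system, which is not available from the stated hypotheses and is not what Corollary~3.1 of \cite{HoePav-14} provides (that corollary controls deviations only on a \emph{fixed} finite horizon).

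The paper's device is different and avoids pathwise control on the full window altogether. One partitions $[0,2r_\e/h_\e]$ into $\ell_\e=\lfloor 2r_\e/(h_\e\tT^*)\rfloor$ subintervals of \emph{bounded} length $\tT_\e\in[\tT^*,2\tT^*)$, where $\tT^*$ is chosen large enough so that both the deterministic ergodic average over $[0,\tT_\e]$ is $\gamma/3$-close to $\int\psi\,dP^\iota$ \emph{and} $\phi_{\tT_\e}(y)$ lies well inside $\ti D^\iota$ for every $y\in\ti D^\iota$ (Lemma \ref{lem: reduced domains}.2). On each subinterval Corollary~3.1 of \cite{HoePav-14} gives $\PP_y(\sup_{s\lqq\tT_\e}\|X^\e_s-\phi_s(y)\|\gqq\beta)\lqq e^{-\e^{-p}}$; on the complementary good event one has simultaneously $|\tfrac{1}{\tT_\e}\int_0^{\tT_\e}\psi(X^\e_s)\,ds-\int\psi\,dP^\iota|\lqq\gamma/3+\varpi_\psi(\beta)$ and $X^\e_{\tT_\e}\in\ti D^\iota$. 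The Markov property then restarts the argument from $X^\e_{\tT_\e}\in\ti D^\iota$, and one iterates $\ell_\e$ times. The accumulated bad-event contribution is at most $\ell_\e\|\psi\|_\infty e^{-\e^{-p}}$, which vanishes because $\ell_\e$ grows only polynomially in $1/\e$. Thus the long-time control is obtained not by a single uniform pathwise estimate but by $\ell_\e$ applications of a fixed-horizon estimate glued together via the strong Markov property; this iteration is the ingredient missing from your sketch.
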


\begin{proof} Fix $0 < \gamma < \delta_0$. For convenience we return to the abbreviation $\ti D^\iota$.
The local ergodicity condition \eqref{eq: ergodicity} of the deterministic dynamical system 
ensures the existence of a constant $\tT^*>0$ such that for all $\tT\gqq \tT^*$ 
\[
\max_{\iota \in \{1,\dots, \kappa\}} 
\sup_{y \in \overline{\ti D^{\iota}}} 
\Big|\frac{1}{\tT} \int_0^\tT \psi(\phi_s(y))\, ds - \int_{K^\iota} \psi(v)\, dP^\iota(v)\Big|< \frac{\gamma}{3}.
\]
According to Lemma \ref{lem: reduced domains}.1.(b) there is a constant $T^*>0$ depending on $R, \delta$ and $\gamma$
which ensures that for all $y \in \ti D^\iota$ and $t\gqq T^*$ 
\[
\dist(\phi_t(y), \pd \ti D^\iota)> \delta_0.  
\]
We choose $\tT^*\gqq T^*$ without loss of generality. 
Denote by $\ell_\e := \lfloor 2r_\e/h_\e \tT^*\rfloor$ the maximal number of times how often $\tT^*$ 
fits into $2r_\e/h_\e$. Then $\tT_\e := 2r_\e/h_\e \ell_\e$ 
satisfies $\tT^*\lqq \tT_\e < 2\tT^*$ for any $\e>0$. 
It is well-known that for any $\rho\in (0,1)$ and $\e>0$ the random variable 
\[\tau := \inf\{t>0\colon  |\Delta_t Z|>\e^{-\rho}\}\]
is exponentially distributed with parameter $\nu(B_{\e^{-\rho}}^c(0))$ and that 
it is independent of the process of $(Z_t)_{0\lqq t < \tau}$ and hence $(X^\e_{t})_{0\lqq t< \tau}$.  
Since by the regular variation of $\nu$ we have $\nu(B_{\e^{-\rho}}^c(0)) / \e^{-\al \rho} \mu(B_1^c(0)) \ra 1$ as $\e\ra 0$, 
there exists a constant $\rho_0 \in (0,1)$ such that for any $\rho \in (0, \rho_0]$ 
\[
\PP\big(\tau> \frac{2r_\e}{h_\e}\big) = \exp\big(-\frac{2r_\e \nu(B_{\e^{-\rho}}^c(0))}{h_\e}\big) \to 1 \quad \mbox{ as }\e \to 0.
\]
In particular, we may choose the upper bounds $\rho_0, \e_0\in (0,1)$ such that for all $\e \in (0, \e_0]$ and $\rho \in (0, \rho_0]$ 
we have $1-\exp(-2r_\e \nu(B_{\e^{-\rho}}^c(0))/h_\e) < \gamma/3\|\psi\|_\infty$. 
For convenience we denote by $\ti \PP$ and $\ti \EE$ the probability measure $\PP(\,\cdot\, |\tau> 2r_\e/h_\e)$ 
and its expectation.
We may assume without loss of generality that $\psi$ is uniformly continuous on $\RR^d$, 
we denote its modulus of continuity by $\varpi_\psi$. Since $\varphi_\psi(\beta) \ra 0$ as $\beta\ra 0$, we may choose $\beta_0\in (0,1)$ 
such that for all $\beta \in (0, \beta_0]$ we have $\varpi(\beta) \lqq \gamma/3$.  
For fixed $\beta \in (0, \beta_0]$ we apply Corollary 3.1 in \cite{HoePav-14} for the upper bound $2\tT^*$ of $\tT_\e$, 
which provides the existence of constants $p_0, \e_0 \in (0,1)$ such that for all $p\in (0, p_0]$ and $\e\in (0, \e_0]$ 
\begin{equation}
\label{eq: main step}
\begin{aligned}
&\ti \EE_y\Big[\frac{h_\e}{2r_\e} \int_0^{\ell_\e \tT_\e} \psi(X^\e_s)\, ds\Big]
= \frac{h_\e}{2r_\e}
\ti \EE_y\Big[\int_{0}^{\tT_\e}\psi(X^\e_s)\, ds
+ \int_{\tT_\e}^{\ell_\e \tT_\e}\psi(X^\e_s)\, ds\Big]\\
&\lqq \frac{h_\e}{2r_\e}
\ti \EE_y\Big[
\Big(\int_{0}^{\tT_\e}\psi(X^\e_s)\, ds + \int_{\tT_\e}^{\ell_\e \tT_\e}\psi(X^\e_s)\, ds\Big) 
\ind\Big\{\sup_{s\in [0, \tT_\e]} \|X^\e_s - \phi_s(y)\| < \beta\Big\}
\Big] + \|\psi\|_\infty e^{-\e^{-p}}
\end{aligned}
\end{equation}
We continue with the first term. Recall that by construction $\ell_\e \tT_\e = 2r_\e/h_\e$. 
We are now in the position to apply the Markov property of $X^\e$ again and obtain the recursion 
\begin{align*}
&\frac{h_\e}{2r_\e}
\ti \EE_y\Big[
\Big(\int_{0}^{\tT_\e}\psi(X^\e_s)\, ds + \int_{\tT_\e}^{\ell_\e \tT_\e}\psi(X^\e_s)\, ds\Big) 
\ind\Big\{\sup_{s\in [0, \tT_\e]} \|X^\e_s - \phi_s(y)\| < \beta\Big\}
\Big] \\
&= \frac{h_\e \tT_\e}{2r_\e} 
\ti \EE_y\Big[
\frac{1}{\tT_\e} 
\int_{0}^{\tT_\e}\psi(X^\e_s)\, ds \cdot
\ind\Big\{\sup_{s\in [0, \tT_\e]} \|X^\e_s - \phi_s(y)\| < \beta\Big\}
\Big]
\\
&\qquad + 
\frac{h_\e}{2r_\e} 
\ti \EE_y\Big[\ti \EE_{X^{\e}_{\tT_\e}}\Big[\int_{0}^{(\ell_\e-1) \tT_\e}\psi(X^\e_s)\, ds\Big] \cdot
\ind\{X^\e_{\tT_\e} \in \ti D^\iota\}\Big]\\
&\lqq \frac{h_\e \tT_\e}{2r_\e} 
\Big(\frac{1}{\tT_\e} 
\int_{0}^{\tT_\e}\psi(\phi_s(y))\, ds + \varpi_{\psi}(\beta)\Big)
+ \frac{h_\e}{2r_\e} 
\sup_{z\in \ti D^\iota} 
\ti \EE_{z}\Big[\int_{0}^{(\ell_\e-1)\tT_\e}\psi(X^\e_s)\, ds\Big] \\
&\lqq \frac{1}{\ell_\e} 
\Big(
\int_{K^\iota}\psi(v)\, dP^\iota(dv) + \frac{\gamma}{3} \Big)
+  \sup_{z\in \ti D^\iota} 
\ti \EE_{z}\Big[\frac{h_\e}{2r_\e}\int_{0}^{(\ell_\e-1)\tT_\e}\psi(X^\e_s)\, ds\Big]. 
\end{align*}
Iterating the step in \eqref{eq: main step} $\ell_\e-1$ times and choosing $\e_0 \in (0, 1)$ such that 
for all $\e \in (0, \e_0]$ we have $\ell_\e \|\psi\|_\infty \exp(-\e^{-p})< \gamma/3$ we obtain  
\begin{align*}
\ti \EE_y\Big[\frac{h_\e}{2r_\e} \int_0^{\ell_\e \tT_\e} \psi(X^\e_s)\, ds\Big]
&\lqq \Big(
\int_{K^\iota}\psi(v)\, dP^\iota(dv) + \frac{\gamma}{3} \Big) + \frac{\gamma}{3},
\end{align*}
and eventually end up with 
\begin{align*}
\EE_y\Big[\frac{h_\e}{2r_\e} \int_0^{\frac{2r_\e}{h_\e}} \psi(X^\e_s)\, ds\Big] 
&\lqq \ti \EE_y\Big[\frac{h_\e}{2r_\e} \int_0^{\frac{2r_\e}{h_\e}} \psi(X^\e_s)\, ds\Big] 
+ \frac{\gamma}{3}\\
&\lqq \int_{K^\iota}\psi(v)\, dP^\iota(dv) + \gamma.
\end{align*}
The lower estimate follows analogously. This finishes the proof. 
\end{proof}

\begin{lem}\label{lem: 2}
Let $\delta^*$, $\delta'$, $\delta>0$ and $R>R_0$ be chosen as above. 
If $0< \gamma' < \delta_0$ and $\iota \in \{1, \dots, \kappa\}$ 
then there is a constant $\e_0 \in (0,1)$ such that for any $x\in D^\iota_{\delta, \delta'}$ and $\e\in (0, \e_0]$  
\begin{equation}\label{eq: approximation on time scale}
\PP_x\Big(X^\e_{\frac{t-r_\e}{h_\e}} \in D^{\iota, R}_{\delta, \delta'}\Big) 
\lqq (1+ \gamma')\PP_x\Big(X^\e_{\frac{t}{h_\e}} \in D^{\iota, R}_{\delta, \delta'}\Big).
\end{equation}
\end{lem}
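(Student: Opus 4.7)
}

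The plan is to apply the Markov property at time $(t-r_\e)/h_\e$ and reduce \eqref{eq: approximation on time scale} to the one-step estimate
\[
\eta(\e) := \sup_{y\in \ti D^\iota}\PP_y(X^\e_{r_\e/h_\e}\notin \ti D^\iota)\longrightarrow 0 \qquad (\e\to 0),
\]
where as before $\ti D^\iota := D^{\iota,R}_{\delta,\delta'}$ and $\ha D^\iota := D^{\iota,R}_\delta$. Indeed, splitting the left-hand side of \eqref{eq: approximation on time scale} according to whether $X^\e_{t/h_\e}\in \ti D^\iota$ and applying the Markov property yields
\[
\PP_x(X^\e_{(t-r_\e)/h_\e}\in \ti D^\iota) \lqq \PP_x(X^\e_{t/h_\e}\in \ti D^\iota) + \eta(\e)\,\PP_x(X^\e_{(t-r_\e)/h_\e}\in \ti D^\iota),
\]
and rearranging gives \eqref{eq: approximation on time scale} as soon as $\e_0$ is chosen so small that $(1-\eta(\e))^{-1}\lqq 1+\gamma'$ for all $\e\in(0,\e_0]$.

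To prove $\eta(\e)\to 0$, fix $\gamma>0$ so small that $B_{2\gamma}(K^\iota)\subset \ti D^\iota$, and let $T^*=T^*_{\delta,R,\gamma}$ be the uniform attraction time provided by Lemma~\ref{lem: reduced domains}(2): one has $\phi_t(z)\in B_\gamma(K^\iota)$ for every $z\in\ha D^\iota$ and every $t\gqq T^*$. Since $r_\e/h_\e\nearrow\infty$, the time $r_\e/h_\e - T^*$ is positive for all small enough $\e$. A second Markov reduction at this intermediate time, combined with the splitting according to whether $X^\e$ has already exited $\ha D^\iota$ before $r_\e/h_\e-T^*$, produces
\[
\PP_y(X^\e_{r_\e/h_\e}\notin \ti D^\iota) \lqq \PP_y(\TT^{\iota,R}(\e)\lqq \tfrac{r_\e}{h_\e}-T^*) + \sup_{z\in\ha D^\iota}\PP_z(X^\e_{T^*}\notin \ti D^\iota).
\]

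The first summand vanishes uniformly in $y\in \ti D^\iota$ by Theorem~\ref{thm: first exit times}: the rescaled exit time $h_\e Q^\iota((\ha D^\iota)^c)\TT^{\iota,R}(\e)$ converges in law to $\mathrm{EXP}(1)$ uniformly in the initial point, while the rescaled threshold $(r_\e - T^*h_\e)Q^\iota((\ha D^\iota)^c)$ tends to zero as $r_\e\searrow 0$. For the second summand, pick $\beta>0$ with $B_\beta(B_\gamma(K^\iota))\subset \ti D^\iota$. For $z\in\ha D^\iota$, the inclusion $\phi_{T^*}(z)\in B_\gamma(K^\iota)$ entails that $\{\sup_{s\in[0,T^*]}\|X^\e_s-\phi_s(z)\|<\beta\}\subset \{X^\e_{T^*}\in \ti D^\iota\}$. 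On the constant-length interval $[0,T^*]$, the complementary event is handled exactly as in the proof of Lemma~\ref{lem: 1}: for a suitable fixed small $\rho\in(0,1)$ the probability of a $Z$-jump of magnitude exceeding $\e^{-\rho}$ is bounded by $T^*h(\e^{-\rho})\to 0$, and on the no-large-jump event the comparison with the deterministic flow via Corollary~3.1 of \cite{HoePav-14} yields $\sup_{s\in[0,T^*]}\|X^\e_s-\phi_s(z)\|\to 0$ uniformly in $z\in\ha D^\iota$. Hence the second summand vanishes, and $\eta(\e)\to 0$ as required.

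The principal technical obstacle is the second step: controlling $\PP_z(X^\e_{T^*}\notin \ti D^\iota)$ uniformly over the larger set $\ha D^\iota$ rather than merely over $\ti D^\iota$ requires the uniform finite-time attraction $\phi_{T^*}(\ha D^\iota)\subset B_\gamma(K^\iota)$, which is exactly the content of Lemma~\ref{lem: reduced domains}(2) and is precisely the purpose of removing $\MM^{\iota,R}_\delta$ when defining $\ha D^\iota$. Without this $\delta$-reduction, trajectories could skirt the characteristic boundary $\pd D^\iota$ for arbitrarily long times, making a uniform $T^*$ unavailable and forcing instead a direct control of $X^\e$ over the full interval $[0,r_\e/h_\e]$, which is out of reach since the expected number of large $Z$-jumps on this diverging time scale is itself unbounded.
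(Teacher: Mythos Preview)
Your argument is correct. Both proofs condition at time $(t-r_\e)/h_\e$, but they target complementary quantities. The paper decomposes $\PP_x(X^\e_{t/h_\e}\in\ti D^\iota)$ according to which domain $\ti D^\ell$ the process occupies at time $(t-r_\e)/h_\e$ and controls only the off-diagonal terms $\ell\neq\iota$ via Theorem~\ref{thm: first exit times}: a transition $\ti D^\ell\to\ti D^\iota$ forces an exit from $\ha D^\ell$, which is unlikely on the short scale $r_\e/h_\e$. As written, this line of reasoning produces the reverse inequality $\PP_x(X^\e_{t/h_\e}\in\ti D^\iota)\lqq(1+\gamma')\PP_x(X^\e_{(t-r_\e)/h_\e}\in\ti D^\iota)$ (the paper's final display, modulo an evident index typo). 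You instead bound the diagonal quantity $\eta(\e)=\sup_{y\in\ti D^\iota}\PP_y(X^\e_{r_\e/h_\e}\notin\ti D^\iota)$, which is exactly what the stated direction of \eqref{eq: approximation on time scale} requires. Because Theorem~\ref{thm: first exit times} concerns exit from the larger set $\ha D^\iota\supsetneq\ti D^\iota$, the exit-time estimate alone does not force $X^\e_{r_\e/h_\e}\in\ti D^\iota$; your additional finite-time step---applying the attraction Lemma~\ref{lem: reduced domains}(2) on the last $T^*$ units of time together with the flow comparison from Corollary~3.1 of \cite{HoePav-14}---is a genuine extra ingredient, not present in the paper's argument, that closes this gap.
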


\begin{proof} Fix $0< \gamma' < \delta_0$. For convenience we return to the abbreviation $\ti D^\iota$.
With the help of the Markov property we obtain 
\begin{align*}
\PP_x\Big(X^\e_{\frac{t}{h_\e}} \in \ti D^\iota\Big) 
&= \sum_{\ell =1}^\kappa \PP_x\Big(X^\e_{\frac{t-r_\e}{h_\e}} \in \ti D^\ell\Big) 
\PP\Big(X^\e_{\frac{t}{h_\e}} \in \ti D^\iota\Big| X^\e_{\frac{t-r_\e}{h_\e}} \in \ti D^\ell\Big)\\
&\lqq \sum_{\ell =1}^\kappa \PP_x\Big(X^\e_{\frac{t-r_\e}{h_\e}} \in \ti D^\ell\Big) 
\sup_{y\in \ti D^\ell} \PP_y\Big(X^\e_{\frac{r_\e}{h_\e}} \in \ti D^\iota\Big)\\
&\lqq \sum_{\ell =1}^\kappa \PP_x\Big(X^\e_{\frac{t-r_\e}{h_\e}} \in\ti D^\ell\Big) 
\sup_{y\in \ti D^\ell} \PP_y\Big(X^\e_{\frac{r_\e}{h_\e}} \in \ti D^\iota\Big).
\end{align*}
For $\ell \neq \iota$, the first exit time $\TT^{\iota, R}_x(\e)$ satisfies the following estimate. 
For any $C\in (0,1)$ there is a constant $\e_0 \in (0,1)$ such that for $\e \in (0, \e_0]$ 
\begin{align*}
\sup_{y\in \ti D^\ell} \PP_y\Big(X^\e_{\frac{r_\e}{h_\e}} \in \ti D^\iota\Big) 
&= \sup_{y\in \ti D^\ell} \PP_y\Big(\TT^{\iota, R}(\e) \lqq \frac{r_\e}{h_\e}\Big) \\
&= \sup_{y\in \ti D^\ell} \PP_y\Big(Q((\ha D^\ell)^c) h_\e \TT^{\iota, R}(\e) 
\lqq Q((\ha D^\ell)^c) r_\e\Big) \\
&\lqq (1+C)\big(1-e^{Q((\ha D^\ell)^c) r_\e}\big).
\end{align*}
The last estimate in the preceding formula
is a direct consequence of the convergence result in Corollary 2.1 of \cite{HoePav-14}. 
Reducing $\e_0$ further if necessary we obtain 
$(1+C)(1-\exp(Q((\ha D^\ell)^c) r_\e)) \lqq \gamma'/\kappa-1$ for $\e \in (0, \e_0]$
and the desired result holds, namely
\begin{align*}
\PP_x(X^\e_{\frac{t}{h_\e}} \in \ti D^\iota) 
&\lqq \PP_x(X^\e_{\frac{t-r_\e}{h_\e}} \in \ti D^\ell)(1+\gamma').
\end{align*}
\end{proof}

\paragraph{Conclusion of the Proof of Theorem \ref{thm: first exit times}:} 
We apply the Lemmas \ref{lem: 1} and \ref{lem: 2} with the choices $\gamma = \gamma' = \delta^*$, 
as well as the minimal value of all $\e_0$ to the right-hand side of inequality \eqref{eq: Markov estimate} and obtain 
for $\e\in (0, \e_0]$ 
\begin{align*}
\EE_x\Big[ \psi(X^\e_{\frac{t+\si r_\e}{h_\e}})\Big]
&\lqq \sum_{\iota =1}^\kappa \Big(\int_{K^\iota} \psi(v) \, dP^\iota(v) + \delta^*\Big)\;
(1+\delta^*) \PP_x\Big(X^\e_{\frac{t}{h_\e}} \in \ti D^\iota\Big) + \|\psi\|_\infty \delta^*\\
&\lqq \sum_{\iota =1}^\kappa \Big(\int_{K^\iota} \psi(v) \, dP^\iota(v)\Big)\;
\PP_x\Big(X^\e_{\frac{t}{h_\e}} \in \ti D^\iota\Big) + \delta^*(1+\delta^*)+ \|\psi\|_\infty \delta^*\\
&= \EE_\iota\Big[\int_{\RR^d} \psi(v) \, dP^{m^\delta_{t}}(v)\Big]+ \delta^*\Big((1+\delta^*)+ \|\psi\|_\infty\Big).
\end{align*}
With the analogous arguments we obtain  
\begin{align*}
\EE_x\Big[ \psi(X^\e_{\frac{t+\si r_\e}{h_\e},x})\Big]  
&\gqq \EE_\iota\Big[\int_{\RR^d} \psi(v)\, dP^{m^\delta_{t}}(v)\Big] -\delta^*\Big(1+\delta^* + \|\psi\|_\infty\Big).
\end{align*}
This finishes the proof. 

\section{Appendix} 
\subsection{Proof of Lemma \ref{lem: reduced domains}}

We fix the maximal distance $\delta_0 := \frac12\min_{\iota}  \dist(K^\iota, \ds\cup_{\iota=1}^\kappa \pd D^\iota)$, 
the minimal cutoff for the domain $\ds R_0 := \inf\{r>0\colon  \cup_{\iota=1}^\kappa K^\iota \subset B_r(0)\}$ and 
an index $\iota \in \{1, \dots, \kappa\}$. 
\begin{enumerate}
 \item Fix $0 < \delta < \delta_0$ and $R>R_0$. 
 Claim: We have $\phi_t(D^{\iota, R}_{\delta}) \subset D^{\iota, R}_\delta$ for all $t\gqq 0$. \\
We use that $\phi_{-t} = \phi_t^{-1}$, the intersection compatibility of preimages, the definition of $D^{\iota, R}_{\delta}$, 
as well as iterated De Morgan's rules to obtain 
\begin{align}
D^{\iota, R}_\delta &= (D^\iota \cap \iI_R) \setminus \bigcup_{t\gqq 0} \phi_{-t}\Big(B_\delta(\pd D^\iota) \cap D^\iota \cap \iI_{R}\Big)\nonumber\\
&= (D^\iota \cap \iI_R) \cap \bigcap_{t \gqq 0} \phi_{-t}\big((D^\iota \cap \iI_{R}) \setminus (B_\delta(\pd D^\iota) \cap D^\iota \cap \iI_{R})\big). 
\label{eq: representation of the r.d.o.a.}
\end{align}
Using the positive invariance of $\iI_R$ and the injectivity of the flow $x \mapsto \phi_t(x)$ for all $x\in \RR^d$ we obtain for $s\gqq 0$ that 
\begin{align*}
\phi_s(D^{\iota, R}_\delta) 
&= \phi_s (D^\iota)  \cap \phi_s(\iI_R) \cap \bigcap_{t \gqq 0} 
\phi_{s}\Big(\phi_{-t}\big((D^\iota \cap \iI_{R}) \setminus (B_\delta(\pd D^\iota) \cap D^\iota \cap \iI_{R})\big)\Big) \\
&=\ D^\iota \cap \phi_s(\iI_R) \cap \bigcap_{t \gqq 0} \phi_{s}
\Big(\phi_{-t}\big((D^\iota \cap \iI_{R}) \setminus (B_\delta(\pd D^\iota) \cap D^\iota \cap \iI_{R})\big)\Big) \\
&= D^\iota \cap \phi_s(\iI_R) \cap \bigcap_{t \gqq 0} \phi_{-t}
\Big((D^\iota \cap \iI_{R}) \setminus (B_\delta(\pd D^\iota) \cap D^\iota \cap \iI_{R})\Big) \\
&\qquad \cap \bigcup_{0< t \lqq s} \Big((D^\iota \cap \iI_{R}) \setminus (B_\delta(\pd D^\iota) \cap D^\iota \cap \iI_{R})\Big) \\
&\subset D^\iota \cap \iI_R \cap \bigcap_{t \gqq 0} \phi_{-t}
\Big((D^\iota \cap \iI_{R}) \setminus (B_\delta(\pd D^\iota) \cap D^\iota \cap \iI_{R})\Big) = D^{\iota, R}_\delta.
\end{align*}

\item 
Fix $0 < \delta < \delta_0$, $R>R_0$ and in addition $0< \gamma < \delta_0$. Claim: 
 there is a constant $T^* = T^*_{\delta, R, \gamma}>0$ 
such that for all $x\in D^{\iota, R}_{\delta}$ and $t\gqq T^*$
\[
u(t;x) \in B_{\gamma}(K^\iota).
\]
Since $K^\iota$ is an attractor, it attracts all bounded closed sets in its domain of attraction. 
$\overline D^{\iota, R}_\delta$ is bounded closed set in $D^\iota$. 
That means for any $\gamma>0$ there is $T^* = T^*(\gamma)$ such that for all $t\gqq T^*$ 
\[
\phi_t\Big(D^{\iota, R}_\delta\Big) \subset \bB_{\gamma}(K^\iota).
\]

\item 
Claim: If $0< \delta < \delta'< \delta_0$ and $R>R_0$, then $D^{\iota, R}_{\delta'} \subset D^{\iota, R}_{\delta}$.\\
This follows immediately from the representation \eqref{eq: representation of the r.d.o.a.} 
by the monotonicity with respect to inclusion of $\delta$, which is stable under preimages.

\item 
Claim: If $\delta, \delta'>0$ such that $\delta+ \delta'<\delta_0$ and $R>R_0$ , then 
$\phi_t(D^{\iota, R}_{\delta, \delta'}) \subset D^{\iota, R}_{\delta, \delta'}$ for all $t\gqq 0$. \\
The proof is virtually identical to the proof of 1, with $D^\iota \cap \iI_R$ replaced by $D^{\iota, R}_{\delta}$. 

\item 
Claim: If $\delta, \delta', \delta''>0$ with $\delta'< \delta''$ and $\delta + \delta''< \delta_0$, then 
$D^{\iota, R}_{\delta, \delta'} \subset D^{\iota, R}_{\delta, \delta''}$.\\
This follows analogously to Claim 3.

\item 
Claim: We have 
\[
\bigcup_{\substack{\delta, \delta'>0\\ \delta+\delta'<\delta_0}} D^{\iota, R}_{\delta, \delta'} = D^{\iota} \cap \iI_R.
\]
We first prove that 
\[
\bigcup_{\substack{0 < \delta < \delta_0}} D^{\iota, R}_{\delta} = D^{\iota} \cap \iI_R.
\]
Recall that by Claim 3 
the family $\big(D^{\iota, R}_{\delta}\big)_{\delta>0}$ is monotically decreasing 
as a function of $\delta$ with respect to the set inclusion. 
For any $x\in D^{\iota} \cap \iI_R$, it is sufficient to find 
$\delta>0$ such that 
\[
x \in \bigcap_{t \gqq 0} \phi_{-t}\big((D^\iota \cap \iI_{R}) \setminus (B_\delta(\pd D^\iota) \cap D^\iota \cap \iI_{R})\big)
\]
Assume $\delta>0$ such that in addition $x\in (D^\iota \cap \iI_{R}) \setminus B_\delta(\pd D^\iota)$. 
Then due to the continuity of $t \mapsto \phi_t(x)$, 
there is $T_\delta = T_\delta(x)>0$ such that 
\[
x \in \bigcap_{0\lqq t< T_\delta}  \phi_{-t}\big((D^\iota \cap \iI_{R}) \setminus (B_\delta(\pd D^\iota) \cap D^\iota \cap \iI_{R})\big).
\]
Furthermore, $\delta \mapsto T_\delta$ is monotonically decreasing and continuous. 
We prove that $\lim_{\delta \ra 0+} T_\delta = \infty$. 
Assume $T_\infty := \sup_{\delta>0} T_\delta < \infty$, then for any $\delta>0$  
\[
\phi_{-(T_{\infty} +1)}(x) \in D^\iota \cap B_\delta(\pd D^\iota) 
\]
and hence 
\[
\phi_{-(T_{\infty} +1)}(x) \in \bigcap_{\delta>0} D^\iota \cap B_\delta(\pd D^\iota) = \pd D^\iota,
\]
which is a contradiction, since $\phi_t(D^\iota) = D^\iota$ for all $t\in \RR$.
Hence $T_\infty = \infty$ and we find $\delta>0$ such that $x\in D^{\iota, R}_{\delta}$. 
The same reasoning holds analogously for $D^{\iota, R}_{\delta}$ replaced by $D^{\iota, R}_{\delta, \delta'}$ 
and $D^{\iota} \cap \iI_R$ by $D^{\iota, R}_{\delta}$.  
\end{enumerate}

\subsection{Local Morse--Smale flows satisfy the local ergodicity property}

It suffice to prove the convergence result for a stable limit cycle $K$ and its domain of attraction $D$. 

\begin{lem}
Consider a stable limit cycle $K$ and its domain of attraction $D$. Denote by $\tT$ the period of $\phi$ on $K$ and $x_0 \in K$. 
Then for any compact subset $A \subset D$ and measurable set $B \in \bB(\RR^d)$ the limit 
\begin{align*}
\lim_{T\ra \infty} \sup_{x \in A} \Big|\frac{1}{T} \int_0^T \ind_B(\phi_s(x))\, ds 
- \frac{1}{\tT} \int_0^{\tT} \ind_B(\phi_s(x_0))\, ds\Big|
\end{align*}
holds true.
\end{lem}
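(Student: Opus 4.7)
The proof plan rests on the asymptotic phase theorem for hyperbolic stable limit cycles: there exist a tubular neighborhood $U \supset K$, constants $C, \lambda > 0$, and a smooth projection $\pi\colon U \to K$ such that
\[
\|\phi_t(y) - \phi_t(\pi(y))\| \lqq C e^{-\lambda t} \|y - \pi(y)\|\qquad \text{for } y \in U,\ t\gqq 0.
\]
This provides the shadowing mechanism: any trajectory sufficiently close to $K$ is exponentially tracked by the trajectory on $K$ through its asymptotic phase. Coupled with the fact that $A \Subset D$ is attracted uniformly to $K$, it will reduce the time average along $\phi_\cdot(x)$ to the period average along $\phi_\cdot(x_0)$.

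The plan is to prove the statement first for $\psi \in \cC_b(\RR^d, \RR)$ in place of $\ind_B$, and then to extend to indicator functions by a sandwich argument. For the continuous case, fix $\eta>0$, choose $\gamma$ so small that the uniform continuity modulus of $\psi$ ensures $|\psi(y) - \psi(\pi(y))| < \eta/3$ whenever $y \in U_\gamma$, and use compactness of $A$ to produce $T^* = T^*(A,\gamma)$ such that $\phi_t(A) \subset U_\gamma$ for all $t \gqq T^*$, uniformly in $x \in A$. For $x \in A$, $T > T^*$, set $z := \pi(\phi_{T^*}(x)) \in K$. Applying the exponential shadowing estimate to $\phi_{s-T^*}(\phi_{T^*}(x))$ versus $\phi_{s-T^*}(z)$ yields
\[
\sup_{x\in A}\Big|\tfrac{1}{T}\int_{T^*}^T \psi(\phi_s(x))\, ds - \tfrac{1}{T}\int_{0}^{T-T^*} \psi(\phi_{s}(z))\, ds\Big| \lqq \tfrac{\eta}{3} + \tfrac{C'}{T}.
\]
Since $\phi_\cdot(z)$ is $\tT$-periodic, the second integral equals $\frac{1}{\tT}\int_0^\tT \psi(\phi_s(x_0))\, ds$ up to an $O(\tT/T)$ boundary term, and this value is independent of $z\in K$ by shift-invariance of the period average. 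Adding the trivial estimate $\|\psi\|_\infty T^*/T$ for the initial segment $[0,T^*]$ establishes uniform convergence on $A$ for continuous $\psi$.

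To pass from continuous integrands to $\ind_B$, I will approximate from above and below by continuous $\psi^\pm\colon \RR^d \to [0,1]$ with $\psi^- \lqq \ind_B \lqq \psi^+$ and $\int_K(\psi^+ - \psi^-)\, dP < \eta$ (this step implicitly requires $P(\pd B \cap K)=0$, which is the generic Borel regularity condition inherited from arc-length measure on the cycle). Sandwiching the time averages between those of $\psi^\pm$ and applying the continuous case to each converts the $\eta$-tightness of the approximation into an $\eta$-error in the limit, uniformly in $x \in A$ because the bound from the continuous case was itself uniform.

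The main obstacle is the interplay between uniformity over $A$ and the fact that $A$ may approach $\pd D$ arbitrarily closely, where the asymptotic phase map $\pi$ is not a priori well defined. The remedy is to split time into the transient phase $[0,T^*]$, during which attraction brings \emph{every} point of $A$ into $U_\gamma$ simultaneously, and the asymptotic phase $[T^*,T]$, on which shadowing and periodicity take over. A secondary technical point is the boundary condition on $B$ needed to justify the sandwich step; without $P(\pd B\cap K) = 0$ the pointwise equality can fail on a negligible sub-collection of Borel sets, but this restriction is consistent with the usage in the main ergodicity hypothesis (D.1) elsewhere in the paper.
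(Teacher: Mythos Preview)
Your argument and the paper's sketch rest on the same dynamical input: uniform attraction of the compact $A$ into a tube around $K$, the asymptotic-phase/exponential-shadowing estimate supplied by hyperbolicity, and periodicity of the orbit on $K$. The execution differs. The paper works with $\ind_B$ directly, breaks $[0,n\tT]$ into period-length blocks, and passes from $\sup_s\|\phi_s(x)-\Pi_K\phi_s(x)\|\to 0$ to $\int_{(i-1)\tT}^{i\tT}\big(\ind_B(\phi_s(x))-\ind_B(\Pi_K\phi_s(x))\big)\,ds\to 0$ without further comment. You instead establish the limit first for continuous $\psi$, where shadowing passes cleanly through the modulus of continuity, and then sandwich $\ind_B$ between continuous $\psi^\pm$. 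Your route is a bit longer but more transparent on the one genuinely delicate point: the passage from uniform proximity of arguments to convergence of the indicator integrals requires $P(\partial B\cap K)=0$, and you flag this explicitly. The paper's sketch silently needs the same hypothesis (take $B=K$: the period average is $1$ while every off-cycle trajectory has time average $0$), so both arguments are correct under that mild restriction, which matches how the ergodicity property is actually invoked elsewhere in the paper.
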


\noindent
\textit{Sketch of the proof}.
First of all note that due to the compactness of $A$ and the openness of $D$ there is a 
minimal positive distance between $A$ and $\pd D$. 
Since $K$ is a global attractor in $D$, for any $\delta>0$ there is $T_{\delta, A}>0$ such that $x\in A$ and 
$t\gqq T_{\delta, A}$ implies 
\[
\phi_t(x) \in \bB_{\delta}(K). 
\]
It is therefore sufficient to prove that
\begin{align*}
\lim_{T\ra \infty} \sup_{x \in \bB_\delta(K)} \Big|\frac{1}{T} \int_0^T \ind_B(\phi_s(x))\, ds 
- \frac{1}{\tT} \int_0^{\tT} \ind_B(\phi_s(x_0))\, ds\Big|.
\end{align*}
Note further that the value $\frac{1}{\tT} \int_0^{\tT} \ind_B(\phi_s(x_0))\, ds$ 
is independent of $x_0\in K$ and trivially 
\[
\frac{1}{\tT} \int_0^{\tT} \ind_B(\phi_s(x_0))\, ds = \frac{1}{n \tT} \int_0^{n\tT} \ind_B(\phi_s(x_0))\, ds.
\]
It is sufficient to check the case $T_n = n \tT$. In this case it is therefore enough to show 
\begin{align*}
\lim_{n\ra \infty} \sup_{x \in \bB_\delta(K)} \Big|\frac{1}{n \tT} \int_0^{n \tT} \ind_B(\phi_s(x))\, ds 
- \frac{1}{n\tT} \int_0^{n\tT} \ind_B(\phi_s(x_0))\, ds\Big|.
\end{align*}
We calculate for $x\in \bB_{\delta}(K)$ and $n\in \NN$ 
\begin{align*}
&\frac{1}{n \tT} \int_0^{n \tT} \ind_B(\phi_s(x)) ds - \frac{1}{n\tT} \int_0^{n\tT} \ind_B(\phi_s(x_0))\, ds \\
&= \frac{1}{n\tT} \sum_{i=1}^n  \int_{(i-1) \tT}^{i \tT} \big(\ind_B(\phi_s(x)) -\ind_B(\Pi_K(\phi_s(x)))\big)\, ds,
\end{align*}
where $\Pi_K$ is the (local) orthogonal projection of $x \in \bB_{\delta}(K)$ onto the smooth curve $K$. 
The hyperbolicity of $K$ and the compactness of $K$ imply that for $\delta>0$ sufficiently small, there exist 
a constant $C_\delta$ and $\la>0$ such that the sequence 
\begin{align*}
f_n:= \sup_{x\in K} \sup_{s\in [(n-1)\tT, n\tT]} |\phi_s(x) - \Pi_K \phi_s(x)|, \quad n\in \NN, 
\end{align*}
satisfies $f_n \lqq C_\delta e^{-\la n}\ra 0$ for all $n\in \NN$.  
This uniform convergence implies the convergence of the Lebesgue integral
\[
\int_{(n-1) \tT}^{n \tT} \big(\ind_B(\phi_s(x)) -\ind_B(\Pi_K(\phi_s(x)))\big)\, ds \ra 0, \quad \mbox{ as }n\ra \infty,
\]
and hence the desired convergence 
\begin{align*}
&\frac{1}{n \tT} \int_0^{n \tT} \ind_B(\phi_s(x))\, ds 
\ra \frac{1}{\tT} \int_0^{\tT} \ind_B(\phi_s(x_0))\, ds \quad \mbox{ as }n\ra \infty.
\end{align*}
\hfill $\square$

\section*{Acknowledgements}
The first author expresses his gratitude to the Berlin Mathematical School (BMS), the 
International Research Training Group (IRTG) 1740: ``Dynamical Phenomena in Complex Networks: 
Fundamentals and Applications'' and the Chair of Probability theory of Universit\"at Potsdam 
for various infrastructure support.

\end{document}